\theoremstyle{plain}
  \newtheorem{thm}{Theorem}[section]
  \newtheorem{lem}[thm]{Lemma}
  \newtheorem{prop}[thm]{Proposition}
\theoremstyle{definition}
  \newtheorem{rmk}[thm]{Remark}
  \newtheorem{ex}[thm]{Example}
\theoremstyle{plain}
\DeclareMathOperator{\tC}{Cone} 
\DeclareMathOperator{\Lie}{Lie}
\numberwithin{equation}{section}
\def\CA{\mathcal{A}}
\def\CF{\mathcal{F}}
\def\CC{\mathcal{C}}
\def\th{\theta}
\def\om{\omega}
\def\Om{\Omega}
\def\w{\wedge}
\def\tr{\mathrm{tr}}
\def\DC{D_{\mathcal{C}}}
\def\FC{\mathcal{F}_{\mathcal{C}}}
\def\pa{\partial}
\newcommand{\overbar}[1]{\mkern 2.5mu\overline{\mkern-2.5mu#1\mkern 1.5mu}\mkern-1.5mu}
\begin{document}

\title{
\bf\
{Mapping Cone Connections and their Yang-Mills Functional}
}

\author{Li-Sheng Tseng and Jiawei Zhou\\
\\
}

\date{}

\maketitle

\begin{abstract} 
For a given closed two-form, we introduce the cone Yang-Mills functional  which is a Yang-Mills-type functional for a pair $(A,B)$, a connection one-form $A$ and a scalar $B$ taking value in the adjoint representation of a Lie group.  The functional arises naturally from dimensionally reducing the Yang-Mills functional over the fiber of a circle bundle with the two-form being the Euler class.  We  write down the Euler-Lagrange equations of the functional and present some of the properties of its critical solutions, especially in comparison with Yang-Mills solutions.  We show that a special class of three-dimensional solutions satisfy a duality condition  which  generalizes the Bogomolny monopole equations. Moreover, we analyze the zero solutions of the cone Yang-Mills functional and give an algebraic classification characterizing principal bundles that carry such cone-flat solutions when the two-form is non-degenerate.

\end{abstract}

\tableofcontents

\section{Introduction}

Let $(M^m,g)$ be a Riemannian manifold and $P$ a principal $G$-bundle over $M$.  For simplicity, we will assume that the Lie group $G$ is compact and a subgroup of $SO(N)$. 

In this paper, we study a Yang-Mills type functional that involves a pair $(A, B)$, with $A$ being a connection form on the associated adjoint bundle $Ad\,P$,
a section $B\in \Om^0(M, Ad\,P)$, 
and also a two-form $\zeta\in \Om^2(M)$ that is $d$-closed.
We shall call this functional the \textit{cone Yang-Mills functional} and it is defined by 
\begin{align}\label{cym}
S_{cY\!M}(A,B)&= \|F_A+\zeta B\|^2 + \|d_A B\|^2 \nonumber\\ 
&= c_G \int_M  \tr \left[(F_A + \zeta B) \w * (F_A + \zeta B) \,+ \, d_A B \w * d_A B \right],
\end{align}
where $F_A= dA + A\w A$ is the curvature, $d_A B = dB+ [A,B]$ is the covariant derivative of $B$, and in the second line, we have inserted the Killing form of the Lie group $G$ which for $SO(N)$ and $SU(N)$ is given by the trace times a constant dependent on $N$, denoted by $c_G$. 
The critical points of the functional are solutions of the associated Euler-Lagrange equations
\begin{align}
d_A^*(F_A+\zeta B)+[B, d_AB] &= 0\,, \label{Afeq}\\
\zeta^*\!\left(F_A+\zeta B\right)+d_A^*d_AB &=0\,,\label{Bfeq}
\end{align}
where $\zeta^*\!:\Om^k(M)\to\Om^{k-2}(M)$ is the adjoint of the wedging map, $\zeta\w \,$, and can be explicitly expressed as 
\begin{align}
\zeta^*=(-1)^{(m-k)k}*\zeta*\,.
\end{align}
Let us give both a physical and a mathematical motivation for considering the cone Yang-Mills functional.  

For the physical motivation, the cone Yang-Mills functional can be considered as the dimensional reduction of the standard Yang-Mills functional over the $S^1$ fiber of a circle bundle, $\pi: X \to M$.  Unlike a product space, a circle bundle generically is one where the fiber circle is non-trivially twisted over $M$.  Such arises for instance in Kaluza-Klein monopole solutions with a non-vanishing background two-form flux $\zeta$.  
On any circle bundle $X$, there exists what is called a global angular one-form $\theta$ which can be locally expressed as $\theta= dz + a$ where $z$ denotes the $S^1$ fiber coordinate and $a$ is the $U(1)$ Kaluza-Klein gauge field.  The two-form flux, $\zeta = d\theta = d(dz+a)=da$, is then the background field strength of the $U(1)$ gauge field and effectively measures the twisting of the $S^1$ fiber.  

Now for the dimensional reduction of the Yang-Mills functional over the $S^1$ fiber of the circle bundle $\pi: X \to M$, 
we take the metric to be
\begin{align}\label{Xmetric}
g_X = \pi^* g_M + \theta \otimes \theta\,,
\end{align}
and further require that the local connection one-form (Yang-Mills gauge field) $\CA$ on $X$ be invariant under translation of the circle fiber.  Such a connection form on $X$ can be expressed as
\begin{align}\label{Xconn}
\CA = A + \theta\, B  \,.   
\end{align}
Here, both $A$ and $B$  takes values on $Ad\, P$, and $B$ represents the component of the connection in the circle direction. Since $\CA$ is invariant under $S^1$ translation, both $A$ and $B$ only have dependence on the coordinates of $M$.  The curvature two-form of $\CA$ (Yang-Mills field strength) on $X$ then has the following expression:
\begin{align}
\CF_\CA = d \CA + \CA \w \CA  = (F_A + \zeta B) + \theta (-d_A B)\,,
\end{align}
where we have used $d\theta=\zeta$.  Comparing with the cone Yang-Mills functional in \eqref{cym}, we see that $S_{cYM}=\|\CF_\CA\|^2\,$,  and therefore, it is just the Yang-Mills functional on $X$ dimensionally reduced to $M$ over the fiber circle.  And we can also interpret the cone Yang-Mills Euler-Lagrange equations \eqref{Afeq}-\eqref{Bfeq} as the dimensionally-reduced Yang-Mills equations over the $S^1$ fiber of $X$.

In physical terms, the cone Yang-Mills functional on $M$ is a Euclidean or Wick-rotated action, situated in a curved background, with Riemannian metric $g$: 
\begin{align*}
S_{cYM}(A,B)&=\int_M c_G \, \tr\left(|F_A + \zeta B|^2 + |d_A B|^2 \right) dV\\
&=\int_M c_G\, \tr \left(|F_A|^2 + |d_A B|^2 + |\zeta|^2B^2 + 2 \,\zeta^* F_A B \right) dV\,, 
\end{align*}
where we have used a simplified notation, e.g. $F_A  \w * F_A = |F_A|^2 dV$.  
The action thus consists of a Yang-Mills part, an adjoint-valued scalar with a ``mass" $|\zeta|$ (which may vary over $M$), and an interaction term between $A$ and $B$.  Heuristically, the Euler-Lagrange equations of motion \eqref{Afeq}-\eqref{Bfeq} can be thought of as a mixing of the Yang-Mills equations with a Klein-Gordon-type scalar plus an interaction term.

The cone Yang-Mills functional 
also has a mathematical motivation. 
In fact, the functional arises naturally from its relation with a mapping cone complex which we will provide an explanation here.  (For a reference on the general mapping cone complex, see for example \cite{Weibel} and also \cite{CTT}.)  

The $d$-closed two-form $\zeta\in \Om^2(M)$ can be thought of as an operator or a \textit{map}, by wedge product between differential forms, i.e. $\zeta\w : \Omega^*(M) \to \Omega^{*+2}(M)$.  
If desired, the form $\zeta$ can represent a geometric structure of interest on $M$, such as a symplectic or hermitian structure, but generally, the form $\zeta$ can be any closed two-form on $M$.  
Letting $\zeta$ map between two de Rham chain complexes leads to a mapping cone complex (see Figure \ref{mccomp}).
\begin{figure}[t]
\begin{equation*}
\begin{tikzcd}
\ldots \arrow[r, "d_C"] & \textcolor{purple}{\tC^k(\zeta)} \arrow[r, "d_C"] & \textcolor{blue}{\tC^{k+1}(\zeta)} \arrow[r, "d_C"] & \textcolor{red}{\tC^{k+2}(\zeta)} \arrow[r, "d_C"] & \ldots & \\
~\ldots \arrow[r, "d"] & \textcolor{purple}{\Om^{k}(M)} \arrow[r, "d"] & \textcolor{blue}{\Om^{k+1}(M)} \arrow[r, "d"]  & \textcolor{red}{\Om^{k+2}(M)}\arrow[r, "d"]  & \ldots~~  \\
 ~~\ldots  \arrow[r, "-d"]  & \textcolor{purple}{\Om^{k-1}(M)}  \arrow[r, "-d"] \arrow[ur, "~\zeta\, \w"] &  \textcolor{blue}{\Om^{k}(M)} \arrow[r, "-d"] \arrow[ur, "~\zeta\, \w"] & \textcolor{red}{\Om^{k+1}(M)}  \arrow[r, "-d"] & \ldots
\end{tikzcd}
\end{equation*}
\caption{The mapping cone complex of the $d$-closed two-form $\zeta$.  The complex of elements $\tC^*(\zeta)=\Om^*(M)\oplus \Om^{*-1}(M)$ arises from the $\zeta$ map between two de Rham complexes.}\label{mccomp}
\end{figure}
The elements of the mapping cone complex consist of pairs of  differential forms:  
\begin{align}\label{Conedef}
\tC^k(\zeta) : &= \Om^k(M) \oplus \Om^{k-1}(M)\,, \qquad  k=0, 1, \ldots, m +1\,.
\end{align}
The differential of the mapping cone complex $d_C: \tC^k(\zeta)\to \tC^{k+1}(\zeta)$ is given by
\begin{align*}
d_C \tC^k(\zeta)&= d_C \left(  \Omega^k \oplus \Omega^{k-1}\right) \\
&= \left(d\,\Om^k + \zeta \w \Omega^{k-1}\right) \oplus -d\,\Omega^{k-1}\,.
\end{align*}
Since the grading of the two components of $\tC^k(\zeta)$ in \eqref{Conedef} are different, it is useful to introduce a formal one-form $\theta$ to supplement the second component so that we can express a cone form as a sum with both components having the same total degree $k$, i.e.
\begin{align}\label{thdef}
\tC^k(\zeta)&= \Om^k \oplus\, \theta\w \Om^{k-1}\,.
\end{align}
Additionally, it is useful for us to impose that the formal one-form $\theta$ satisfy $d\theta = \zeta$.  For this will allow us to interpret the cone differential $d_C$ simply as the exterior derivative:
\begin{align*} 
d_C \tC^k(\zeta) &= d \left(\Om^k \oplus\, \theta \w \Om^{k-1}\right)\\
&=\left(d\,\Om^k + \zeta \w \Omega^{k-1}\right) \oplus\,  \theta \w \left(-d\,\Omega^{k-1}\right)\,.
\end{align*}
It then becomes self-evident that $d_C\,d_C =0\,.$ 
Moreover, there is a natural product on the cone space $\tC^*(\zeta)$ given by the usual wedge product
\begin{align*}
\tC^j(\zeta) \times \tC^k(\zeta) :&= \left(\Om^j \oplus\, \theta \w \Om^{j-1}\right) \w \left(\Om^k \oplus\, \theta \w \Om^{k-1} \right)\\
&=\left(\Om^j \w \Om^k\right) \oplus \,\theta \w \left(\Om^{j-1} \w \Om^k + (-1)^j \Om^j \w \Om^{k-1}\right)\,.
\end{align*}
This product satisfies the Leibniz rule with respect to $d_C$.  In all, we see that we have a mapping cone algebra, $(\tC^*(\zeta), d_C, \times)$, that satisfies the  conditions of a differential graded algebra (DGA).

Now in the presence of the associated adjoint bundle, we should consider the twisted cone forms: 
\begin{align*}
\tC^k(\zeta)(M, Ad\,P)=\Om^k(M, Ad\,P) \oplus\, \theta \w  \Om^{k-1}(M, Ad\,P)
\end{align*}
which take values on the associated adjoint bundle $Ad\,P$.  In this context, the differential $d_C$ must also be twisted by a cone connection form $\CA$
\begin{equation*}
\begin{tikzcd}
\ldots \arrow[r, "d_C+\CA"] & {\tC^k(\zeta)(M, Ad\,P)} \arrow[r, "d_C+\CA"] & {\tC^{k+1}(\zeta)(M, Ad\,P)} \arrow[r, "d_C+\CA"] & {\tC^{k+2}(\zeta)(M, Ad\,P)} \arrow[r, "d_C+\CA"] & \ldots 
\end{tikzcd}
\end{equation*}
where $\CA = A + \theta\w B\,$, 
with $A$ being the connection form on $M$ 
and $B \in \Om^0(M, Ad\, P)$. The cone curvature then takes the form
\begin{align}\label{Ccurv}
\CF_\CA = (d_C+ \CA)^2= d_C \CA + \CA \w \CA  = (F_A + \zeta B) + \theta\w (-d_A B)\,,
\end{align}
where $F_A = d A + A\w A$.  The above twisted complex is only a differential complex if $\CF_\CA =0$.  This requires that the cone curvature $\CF_\CA$ vanishes, which from \eqref{Ccurv} corresponds to $(A, B)$ satisfying what we shall call the cone-flat condition with respect to $\zeta$, or simply just the cone-flat condition,
\begin{align}\label{coneflat}
F_A + \zeta\, B =0\,,  \qquad  d_A\, B = 0\,.   
\end{align}  
The cone Yang-Mills functional of \eqref{cym} is then just the normed square of the cone curvature, $\|\CF_\CA \|^2$.  It is worthwhile to emphasize that this mapping cone perspective only requires that the two-form $\zeta\in \Om^2(M)$ be $d$-closed and nothing more.  
This is in contrast with the dimensional reduction perspective, where $\zeta$ mathematically represents the Euler class of the circle bundle $\pi: X\to M$ and hence would need to be an element of $H^2(M, \mathbb{Z})$.

In this paper, we take an important first step in understanding cone Yang-Mills solutions.  We will show that a subset of the solutions involves Yang-Mills connections.  For instance, in dimension two when $M$ is compact and $\zeta$ is taken to be the volume form, the above cone-flat condition \eqref{coneflat} implies exactly the two-dimensional Yang-Mills condition
\begin{align}\label{2DYM}
d_A^*\, F_A = -*\,d_A *\, F_A=*\, d_A B = 0\,, 
\end{align}
having noted that $*\, \zeta  = 1$.  Conversely, in two dimensions, if $(A, B)$ is a cone-Yang-Mills solution with $A$ also a Yang-Mills connection, then the pair $(A,B)$ must be cone-flat.

In higher dimensions, certain special classes of Yang-Mills connections can be paired with a scalar $B$ to obtain cone Yang-Mills solutions. In the trivial case where we set $\zeta=0$, any Yang-Mills connection $A$ together with a covariantly constant $B$ is trivially a cone Yang-Mills solutions.  When $\zeta \neq 0$, Yang-Mills connections $A$ such that the curvature two-form satisfy $\zeta^*F_A=0$ are cone Yang-Mills solutions with $B=0$.  When $\zeta$ is a harmonic form, cone-flat solutions are always composed of a Yang-Mills connection with an appropriate scalar section $B$.
  
On the other hand, it should be evident that the space of cone Yang-Mills solutions $(A, B)$ is generally much richer and different from that of Yang-Mills solutions.  As we will show in explicit examples, not all Yang-Mills connection $A$ can be paired with a scalar $B$ to form a cone Yang-Mills solutions.  Conversely, there are also cone Yang-Mills solutions $(A,B)$ where $A$ is not Yang-Mills.  Furthermore, given a cone Yang-Mills solutions $(A, B)$, there may be other scalars $B'$ such that $(A,B')$ remain cone Yang-Mills. 

Interestingly, when $M$ is three-dimensional, we are able to write down a Bogomolny-type condition, that gives a sufficient condition on $(A, B)$ to be a cone Yang-Mills solution.  Such a condition can be motivated by considering $\CF_\CA$ as the Yang-Mills curvature on a four-dimensional circle bundle $X$ and imposing the (anti-)self-dual condition.  Then, dimensionally reducing on the fiber $S^1$ by expressing $\CF_\CA$ as in  \eqref{Ccurv}, the (anti-)self-dual condition implies the three-dimensional condition 
\begin{align}
\label{Beqng}
F_A + \zeta B = \pm * d_A B\,.
\end{align}
Notice that if we set $\zeta=0$, the above equation is just the Bogomolny monopole equation. 
Similar to the (anti-)self-dual Yang-Mills condition in four dimensions, \eqref{Beqng} is a first-order condition whose solutions solve \eqref{Afeq}-\eqref{Bfeq} in three dimensions.  This duality condition will assist us in finding non-abelian, cone Yang-Mills solutions.  Indeed, we shall give in Section \ref{3dsol} an explicit $SU(2)$ solution of \eqref{Beqng} that comes from dimensionally reducing the four-dimensional Taub-NUT gravitational instanton solution.

Concerning the cone-flat condition, we are able to characterize principal bundles that carry a cone-flat connection, i.e. a pair  $(A,B)$ satisfying \eqref{coneflat}, when $\zeta$ is a non-degenerate two-form. This type of cone-flat connections can interestingly be classified similar to Atiyah-Bott's classification of bundles carrying Yang-Mills connections on Riemann surfaces \cite{Atiyah-Bott}.  Our classification of the cone-flat bundles however depends on the given $\zeta$ and the second homotopy group, $\pi_2(M)$.
\begin{thm}\label{coneflatThrm}
Let $M$ be a path connected manifold, $\zeta\in\Omega^2(M)$ be a non-zero, non-degenerate, closed two-form, and $G$ be a Lie group. There exists a bijective correspondence between the following sets:
\begin{align*}
\left\{\begin{matrix}\text{isomorphism classes of cone-flat connections}
\\ \text{with respect to $\zeta$ on  $G$-bundles over $M$}
\end{matrix}\right\}
\simeq
\left\{\begin{matrix}\text{conjugacy classes of}\\ \text{homomorphisms }
\rho: \Gamma\to G\end{matrix}\right\},
\end{align*}
where $\Gamma$ is an $\mathbb{R}/\overbar{H}$ extension of $\pi_1(M)$ with $\overbar{H}\subset\mathbb{R}$ being the closure of the group 
$$
H:=\left\{ \int_\mathcal{S}\zeta \ \big| \  \mathcal{S} \text{ is a representative in } \pi_2(M) \right\}.
$$
\end{thm}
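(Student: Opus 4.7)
The strategy is to generalize the classical correspondence between flat $G$-connections and conjugacy classes of $\pi_1$-representations to the cone-flat setting, using the mapping cone interpretation of \eqref{Ccurv}: heuristically, a cone-flat pair $(A,B)$ gives a flat connection $\mathcal{A}=A+\theta B$ on a fictitious total space $X\to M$ with $d\theta=\pi^*\zeta$, and $\Gamma$ should play the role of $\pi_1(X)$, suitably enhanced so that the full $\mathbb{R}/\overbar{H}$, rather than only $\pi_1$ of the fiber, enters. Rather than committing to a particular construction of $X$, I will build the correspondence directly in terms of holonomy and an intrinsic model for $\Gamma$.

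First, I would define $\Gamma$ intrinsically as equivalence classes of pairs $(\gamma, t)$, where $\gamma$ is a based loop in $M$ and $t\in \mathbb{R}/\overbar{H}$, under the relation $(\gamma_1, t_1)\sim (\gamma_2, t_2)$ whenever some homotopy $\Sigma$ from $\gamma_1$ to $\gamma_2$ satisfies $t_1-t_2\equiv \int_\Sigma \zeta \pmod{\overbar{H}}$. The group law is concatenation of loops together with addition in $\mathbb{R}/\overbar{H}$; one checks that this yields a central extension $1\to \mathbb{R}/\overbar{H}\to \Gamma\to \pi_1(M)\to 1$ whose class is determined by the transgression of $[\zeta]$ (for example, on $(T^2,dx\wedge dy)$ one recovers the Heisenberg group $\mathrm{Nil}^3$). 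The kernel arises from self-homotopies of the constant loop, whose $\zeta$-periods span $H\subset\mathbb{R}$, closed off to $\overbar{H}$.

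For the forward map, given a cone-flat $(A,B)$ on a $G$-bundle $P\to M$ with trivialization at the basepoint $p_0$, set $\rho(\gamma, t):=\mathrm{Hol}_A(\gamma)\cdot \exp(tB(p_0))\in G$. Since $d_AB=0$, parallel transport preserves $B$, so $B(p_0)$ commutes with $\mathrm{Hol}_A(\gamma)$ for every loop $\gamma$, making $\rho$ a genuine homomorphism. Well-definedness under the equivalence on $\Gamma$ rests on the non-abelian Stokes formula: for $\gamma_1\simeq \gamma_2$ bounding $\Sigma$, $\mathrm{Hol}_A(\gamma_1)\mathrm{Hol}_A(\gamma_2)^{-1}=\exp\!\bigl(\int_\Sigma F_A\bigr)=\exp\!\bigl(-(\int_\Sigma\zeta)B(p_0)\bigr)$, exactly canceling the $t$-shift in $\exp(tB(p_0))$. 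Changing trivialization or basepoint conjugates $\rho$, so $[\rho]$ is an invariant of the isomorphism class of $(A,B)$. Conversely, given $\rho:\Gamma\to G$, one extracts $B_0\in \mathfrak{g}$ from the continuous one-parameter subgroup $\rho|_{\mathbb{R}/\overbar{H}}=\exp(\cdot B_0)$, and then uses the residual $\pi_1(M)$-data of $\rho$ (a twisted representation valued in the centralizer of $\exp(\mathbb{R}B_0)$) to build a $G$-bundle $P\to M$ with connection $A$ by the associated-bundle construction on the universal cover; the section $B$ is obtained by covariantly transporting $B_0$.

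The principal obstacle will be verifying that the forward and backward maps are mutual inverses on equivalence classes, which amounts to showing that independence from choices (basepoint, trivialization, representative loops and homotopies) holds on both sides, and that the backward construction truly produces a cone-flat pair. Specifically, non-degeneracy of $\zeta$ enters through the injectivity of the map $B\mapsto \zeta B$, which is needed to recover $B$ uniquely from $F_A$ and to ensure $d_AB=0$ (via Bianchi: $\zeta\wedge d_AB = d_A(\zeta B)=-d_AF_A=0$, and then non-degeneracy of $\zeta$ forces $d_AB=0$). The passage from $H$ to its closure $\overbar{H}$ is justified because $\ker(t\mapsto \exp(tB_0))$ is a closed subgroup of $\mathbb{R}$ (as $G\subset SO(N)$ is a Lie group), so any one-parameter subgroup killing $H$ automatically kills $\overbar{H}$. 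Once these technical points are in place, the theorem follows by tracking how cone curvature, holonomy, and representations of $\Gamma$ correspond.
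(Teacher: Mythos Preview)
Your approach is close in spirit to the paper's---both are holonomy-based and both model $\Gamma$ as loops decorated by $\zeta$-periods---but two of your key steps are not yet justified, and one of them would fail as written.

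First, the ``non-abelian Stokes formula'' you invoke,
\[
\mathrm{Hol}_A(\gamma_1)\,\mathrm{Hol}_A(\gamma_2)^{-1}=\exp\!\Bigl(\int_\Sigma F_A\Bigr),
\]
is not a general identity: for a non-abelian connection the holonomy around $\partial\Sigma$ is a surface-ordered exponential, and the integrand $F_A\in\Omega^2(M,\mathrm{Ad}\,P)$ cannot even be integrated to a single Lie algebra element without further structure. What makes the formula true here is precisely that $F_A=-\zeta B$ with $d_AB=0$, so that after parallel transport back to $p_0$ every curvature value lies on the line $\mathbb{R}\,B(p_0)$ and the ordering collapses. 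The paper isolates this as its central lemma and proves it via the Ambrose--Singer holonomy theorem: the holonomy Lie algebra is one-dimensional abelian, hence the restricted connection is $\hat A=-\theta\otimes\xi$ with $d\theta=\hat\pi^*\zeta$, and ordinary (abelian) Stokes finishes the computation. You need an argument of this kind; citing ``non-abelian Stokes'' sweeps the main content under the rug.

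Second, your backward construction does not work as stated. You propose to use ``the residual $\pi_1(M)$-data of $\rho$'' together with ``the associated-bundle construction on the universal cover.'' But the extension $1\to\mathbb{R}/\overbar{H}\to\Gamma\to\pi_1(M)\to 1$ is generally non-split (your own Heisenberg example on $T^2$ shows this), so there is no homomorphism $\pi_1(M)\to G$ to feed into $\tilde M\times_{\pi_1}G$; a set-theoretic section gives only a projective representation, and the resulting quotient is not a principal bundle. The paper avoids this entirely by first constructing a genuine principal $\Gamma$-bundle $\Psi_a(M)\to M$ out of based paths modulo $\Psi^a_{a,\zeta}(M)$, and then forming $P_\rho=\Psi_a(M)\times_\rho G$; the connection $A_\rho$ is defined by declaring horizontal lifts to be $t\mapsto[\alpha_{(t)}\delta,g]$. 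This is the step where the path-space model for $\Gamma$ really pays off compared with your pair model $(\gamma,t)$: it hands you the right principal bundle for free. If you want to keep your description of $\Gamma$, you must still produce such a $\Gamma$-bundle (e.g.\ paths from $a$ decorated by an $\mathbb{R}/\overbar{H}$-phase, modulo the same relation) before you can run the associated-bundle construction.

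A minor point: your Bianchi argument $\zeta\wedge d_AB=0\Rightarrow d_AB=0$ uses injectivity of $\zeta\wedge:\Omega^1\to\Omega^3$, which fails when $\dim M=2$. This does not actually damage the proof---in the forward direction $d_AB=0$ is assumed, and in the backward direction $B$ is obtained by covariant transport so $d_AB=0$ by construction---but you should not cite it as the place non-degeneracy enters. The paper instead uses non-degeneracy through Darboux coordinates when verifying $F_{A_\rho}=-\zeta B_\rho$.
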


This paper is organized as follows. In Section 2, after a brief description of our notations/conventions, we proceed to consider the first-order variation of the cone Yang-Mills functional to obtain its Euler-Lagrange equations.  We also show that modulo gauge equivalence, the cone Yang-Mills equations are elliptic and hence has a finite-dimensional solution space on a closed manifold.  We also describe properties of cone Yang-Mills solutions under certain conditions for the two-form $\zeta$ and structure group $G$, and especially emphasizing its relationship to Yang-Mills connections.  In Section 3, we work out the special case of abelian cone Yang-Mills solutions in dimension two.  We also discuss the three-dimensional Bogomolny-type monopole condition \eqref{Beqng} and give an explicit non-trivial $SU(2)$ cone Yang-Mills solutions on $M=\mathbb{R}^3 - \{0\}$, i.e. the Euclidean space with the origin removed.   Finally, in Section 4, we consider bundles that can carry cone-flat solutions when $\zeta$ is a non-degenerate, closed two-form, and prove the classification of cone-flat bundles of Theorem \ref{coneflatThrm}.

\

\noindent{\it Acknowledgements.~} 
We would like to thank Alberto Cattaneo, David Clausen, Jonathan Delgado, Bernard Julia, Claude LeBrun, Si Li, Jianfeng Lin, Xiang Tang, Daniel Waldram, Jiaping Wang, Richard Wentworth, and Dan Xie for helpful comments and suggestions.  We would like to acknowledge the support of the Simons Collaboration Grant No.~636284 for the first author and the support of the National Key Research and Development Program of China No.~2020YFA0713000 for the second author.

\section{Properties of cone Yang-Mills solutions}

\subsection{Preliminaries}

Let $M$ be a smooth manifold and $P$ be a principal bundle over $M$.  We consider the associated adjoint bundle $Ad\,P$ equipped with an inner product that is invariant under the adjoint action.  
For the inner product on $\Om^*(M,Ad\,P)$, we use the standard one induced from the Riemannian metric $g$ and the Killing form of the Lie group $G$, i.e. for $\eta_1,\eta_2 \in\Omega^*(M,Ad\,P)$,
\begin{align}\label{met0}
\langle\eta_1,\eta_2\rangle = c_G \int_M \tr \left[ \eta_1 \w *\eta_2 \right]\,,
\end{align}
where the constant $c_G=-(N-2)$ if $G=SO(N)$.  As will be useful later, we note here for $\eta\in \Omega^0(M,Ad\,P)$, we have the relation
\begin{align}\label{adjinp}
\langle [\eta_1,\eta],\eta_2\rangle=\langle \eta_1,[\eta,\eta_2]\rangle.
\end{align}

We can extend the inner product $\langle-,-\rangle$ on $\Om^*(M,Ad\,P)$ to $\langle-,-\rangle_{\mathcal{C}}$ on $\tC^*(\zeta)(M,Ad\,P)=\Om^*(M,Ad\,P) \oplus \theta \Om^{*-1}(M,Ad\,P)$ for a $d$-closed two-form, $\zeta\in \Om^2(M)$.   We do so by setting
\begin{align} \label{cmet1}
\langle \eta_1+\theta\xi_1, \eta_2+\theta\xi_2\rangle_{\mathcal{C}}=\langle\eta_1,\eta_2\rangle+\langle\xi_1,\xi_2\rangle.
\end{align}
Note that $\theta$ is a formal one-form with property $d\theta=\zeta$.  When paired with another differential form, there is a wedge product, $\theta \w$, which for notational simplicity, as in \eqref{cmet1}, we will just assume without writing out explicitly.

With \eqref{met0} and \eqref{cmet1}, we can also express $\langle-,-\rangle_{\mathcal{C}}$ as an integral by introducing the Hodge star operator for cone forms
\begin{align} \label{conestar}
*_{\mathcal{C}}:\tC^k(\zeta)(M,Ad\,P)~&\to~~\tC^{m+1-k}(\zeta)(M,Ad\,P)\nonumber\\ 
\eta+\theta\xi~\qquad&\mapsto~~ *\xi + \theta\, (-1)^{|\eta|} \!*\eta 
\end{align}
which allows us to write
\begin{align}\label{cmet2}
\langle \eta_1+\theta\xi_1, \eta_2+\theta\xi_2\rangle_{\mathcal{C}}=c_G \int_M \tr \, \frac{\partial}{\partial\theta}[(\eta_1+\theta\xi_1)\wedge *_{\mathcal{C}}(\eta_2+\theta\xi_2)]
\end{align}
where $\frac{\partial}{\partial\theta}(\theta\xi)=\xi$ for any $\xi\in\Om^*(M,Ad\,P)$.

Having defined the inner product on $\tC^*(\zeta)(M, Ad\,P)$, we  write down the Yang-Mills functional associated with the cone connection form.  Given a connection form $A$ of $Ad\,P$ over $M$ 
and a section $B\in\Om^0(M,Ad\,P)$, we write the cone connection form as  $\CA=A + \theta B$. 
Locally, if we write the usual covariant derivative as $d_A = d+ A$, then the cone covariant derivative is given by $\DC = d+\CA= d + (A + \th B)$ and the cone curvature takes the form
\begin{align*}
\CF_\CA=\DC\DC 
&= d (A + \th B) + (A + \th B) \w (A + \th B) \\
&= (F_A + \zeta B) - \th\, d_A B \,,
\end{align*}
having used the relation $\zeta=d\theta\,$.  The norm squared of the cone curvature is the cone Yang-Mills functional
\begin{align*}
S_{cY\!M}(A+\theta B)&
=\|\CF_\CA\|_{\mathcal{C}}^2
=\|(F_A+\zeta B)-\theta\, d_A B\|^2_{C}\\
&=\|F_A + \zeta B\|^2 +\| d_A B\|^2\,.
\end{align*}
The zero points of the functional gives the cone-flat condition for the pair $(A, B)$:
\begin{align}\label{cflat}
F_A+\zeta B=0,\qquad d_AB=0.
\end{align}
If satisfying \eqref{cflat}, we will call the pair $(A, B)$ cone-flat connections with respect to $\zeta$, or often, just simply cone-flat connections.

Now to obtain the equations for the critical points of the cone Yang-Mills functional, we consider the first order variation,  $(A,B) \rightarrow (A + t\eta, B + t\xi)$ where $\eta\in\Om^1(M,Ad\,P)$ and $\xi\in\Om^0(M,Ad\,P)$. For the standard curvature, we find
\begin{align*}
F_{A+t\eta}=F_A+td_A\eta+t^2 \eta \w \eta\,,
\end{align*}
where locally, $d_A \eta = d\eta + A \w \eta + \eta \w A=: d\eta + [A, \eta]$.
Furthermore, we have
\begin{align*}
S_{cY\!M}&(A+t\eta, B+t\xi)\\ 
&= \|F_{A+t\eta}+\zeta (B+t\xi)\|^2+\|d_{A+t\eta}(B+t\xi)\|^2 \\
&= \|F_A+\zeta B\|^2+\|d_AB\|^2 + 2t\left(\langle F_A+\zeta B,d_A\eta+\zeta\xi \rangle + \langle d_AB,[\eta,B]+d_A\xi \rangle \right) + o(t) \\
&= \|\CF_\CA\|_{\mathcal{C}}^2 + 2t\big( \langle d_A^*(F_A+\zeta B)+[B, d_AB],\eta \rangle + \langle \zeta^*(F_A+\zeta B)+d_A^*d_AB,\xi \rangle \big) + o(t)
\end{align*}
where for $\dim M=m$, 
\begin{align}
d_A^*=(-1)^{mk+m+1}*d_A*\,,
\end{align}
and $\zeta^*\!:\Om^k(M)\to\Om^{k-2}(M)$ is the adjoint of the $\zeta\w$ map and defined to be 
\begin{align}
\zeta^*=(-1)^{(m-k)k}*\zeta*\,.
\end{align}
Hence, the stationary solutions of the cone Yang-Mills functional satisfy
\begin{align}
d_A^*(F_A+\zeta B)+[B, d_AB] &= 0\,, \label{cYMa}\\
\zeta^*\!\left(F_A+\zeta B\right)+d_A^*d_AB &=0\,. \label{cYMb}
\end{align}
We will call a pair $(A, B)$ satisfying the above cone Yang-Mills equations \eqref{cYMa}-\eqref{cYMb} cone Yang-Mills connections.

\subsection{Elliptic property of the cone Yang-Mills solutions}
In this subsection, we shall prove the following theorem.
\begin{thm}
On a closed manifold $M$, the space of cone Yang-Mills connections modulo gauge equivalence is finite-dimensional.
\end{thm}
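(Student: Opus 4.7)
The strategy is to linearize the cone Yang-Mills equations at a critical point, adjoin a Coulomb-type gauge slice, and verify that the assembled operator is elliptic, after which finite-dimensionality follows from Fredholm theory on the closed manifold $M$.

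At a cone Yang-Mills connection $(A,B)$, the infinitesimal gauge action of $u \in \Om^0(M, Ad\,P)$ is
\begin{align*}
L_1(u) = (d_A u,\; [u, B]) \in \Om^1(M, Ad\,P) \oplus \Om^0(M, Ad\,P),
\end{align*}
and $S_{cY\!M}$ is manifestly gauge invariant from the cone curvature expression \eqref{Ccurv}, which yields $L_2 L_1 = 0$ for the linearization $L_2$ of the Euler-Lagrange system \eqref{cYMa}-\eqref{cYMb}. A direct inspection shows that every contribution to $L_2(\eta,\xi)$ coming from $\zeta^* d_A\eta$, $d_A^*(\zeta\xi)$, the commutator terms $[B, d_A \xi]$, $[\xi, d_A B]$, $[\eta,F_A + \zeta B]$, and the variation of the Hodge adjoint is at most first-order in $(\eta,\xi)$; hence the top-order part of $L_2$ is
\begin{align*}
L_2(\eta,\xi) = \bigl(d_A^* d_A\, \eta,\; d_A^* d_A\, \xi\bigr) + \text{(lower order)}.
\end{align*}

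To remove infinitesimal gauge directions, I would adjoin the Coulomb-type condition $L_1^*(\eta,\xi) = 0$—the $L^2$-adjoint of $L_1$ with respect to the inner products \eqref{cmet1}-\eqref{cmet2}, whose principal part is $d_A^* \eta$—and form the assembled operator
\begin{align*}
T := L_1^* \oplus L_2 :\ \Om^1(M, Ad\,P) \oplus \Om^0(M, Ad\,P) \longrightarrow \Om^0(M, Ad\,P) \oplus \Om^1(M, Ad\,P) \oplus \Om^0(M, Ad\,P).
\end{align*}
For a nonzero covector $p \in T^*_x M$, the principal symbol of $T$ sends $(\eta,\xi)$, up to conventional signs, to
\begin{align*}
\sigma(T)(p)(\eta,\xi) = \bigl(-\iota_{p^\sharp}\eta,\ |p|^2 \eta - p \w \iota_{p^\sharp}\eta,\ |p|^2 \xi\bigr).
\end{align*}
The third component forces $\xi = 0$; and once $\iota_{p^\sharp}\eta = 0$, the middle component reduces to $|p|^2 \eta$, so $\eta = 0$ as well. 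Hence $T$ is elliptic.

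Since $M$ is closed, $T$ is Fredholm with finite-dimensional kernel. By standard Hodge-theoretic orthogonal decomposition, $\ker T = \ker L_1^* \cap \ker L_2$ is identified with the first cohomology $H^1 = \ker L_2 / \operatorname{im} L_1$ of the deformation complex—i.e., the formal tangent space to the moduli space at $[(A,B)]$. A Kuranishi-type slice construction then locally models a neighborhood of $[(A,B)]$ as the zero locus, in a ball in $H^1$, of an obstruction map into $H^2$, both finite-dimensional, so the moduli space has bounded local dimension everywhere. The principal obstacle is simply the bookkeeping of the symbol computation, namely verifying that all the $\zeta$-, $\zeta^*$-, and $B$-dependent cross terms in \eqref{cYMa}-\eqref{cYMb} and in $L_1$ are genuinely lower-order; once this is in place, the elliptic regularity conclusion is standard.
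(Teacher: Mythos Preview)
Your proposal is correct and follows essentially the same Atiyah--Bott--style argument as the paper: linearize the Euler--Lagrange system at a solution, adjoin the Coulomb gauge slice, and check that the resulting operator is elliptic by observing that only the $d_A^* d_A$ terms contribute to the principal symbol. The sole difference is packaging---the paper organizes everything via the cone covariant derivative $D_{\mathcal C}$ and forms the square cone Laplacian $\Delta_{\mathcal C}=D_{\mathcal C}^*D_{\mathcal C}+D_{\mathcal C}D_{\mathcal C}^*$, whereas you work componentwise and assemble the overdetermined operator $L_1^*\oplus L_2$; the symbol analysis and conclusion are the same.
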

Our proof will follow the same line of arguments as Atiyah-Bott's proof of the analogous statement \cite{Atiyah-Bott}*{Sec. 4} for the Yang-Mills functional.
\begin{proof}
We compute the linearized  variation of the cone Yang-Mills equations.  Let  $(A,B) \rightarrow (A + t\eta, B + t\xi)$ where $\eta\in\Om^1(M,Ad\,P)$ and $\xi\in\Om^0(M,Ad\,P)\,.$  The  linearized variation of the first cone Yang-Mills equation  \eqref{cYMa} is of the form
\begin{align}
d_{A+t\eta}^*\big(&F_{A+t\eta}+\zeta(B+t\xi)\big)+[B+t\xi, d_{A+t\eta}(B+t\xi)] \nonumber\\
    &= d_A^*(F_A+\zeta B)+[B, d_AB] 
    + t\Big(  d_A^*d_A\eta- [B, [B,\eta]] +d_A^*(\zeta \xi) + [B, d_A\xi] \nonumber\\
&\qquad \qquad    + [\xi, d_A B]  + (-1)^{m+1}*[\eta, *(F_A+\zeta B)] \Big)+o(t)\,,\label{Seq}
\end{align}
and for the second equation \eqref{cYMb}, we find
\begin{align}
    \zeta^*& F_{A+t\eta}+\zeta^*\Big(\zeta(B+t\xi)\Big)+d_{A+t\eta}^*d_{A+t\eta}(B+t\xi) \nonumber\\
    &= \zeta^* F_A+\zeta^*(\zeta B)+d_A^*d_AB + t\Big( d_A^*d_A\xi+ \zeta^*(\zeta\xi) +\zeta^* d_A\eta - d_A^*[B,\eta] - *[\eta, *d_AB]  \Big)+o(t)\,.\label{Teq}
\end{align}

With \eqref{Seq}-\eqref{Teq}, we see that  $A+\theta B+t(\eta+\theta\xi)+o(t)$ describes a curve of the critical points of the cone Yang-Mills functional if and only if
\begin{align}
\Big\{d_A^*d_A\eta- [B, [B,\eta]] +d_A^*(\zeta \xi) + [B, d_A\xi]  \Big\} + [\xi, d_A B] + (-1)^{m+1}*[\eta,*(F_A+\zeta B)] &=0 \label{STsya}\\ 
\Big\{d_A^*d_A\xi+\zeta^*(\zeta\xi) +\zeta^* d_A\eta - d_A^*[B,\eta]\Big\} - *[\eta, *d_AB]&=0 \label{STsyb}
\end{align}

We can write  \eqref{STsya}-\eqref{STsyb} more simply in terms of $\DC$, the cone covariant derivative.  Locally, if $d_A = d+ A$, then $\DC = d + (A + \th B)$ and 
\begin{align*}
\DC (\eta + \th \xi) = d_A \eta + \zeta \w \xi + \th ([B, \eta] - d_A \xi)\,,
\end{align*}which we can express in matrix form as
\begin{align}\label{DCop}
\DC \begin{pmatrix} \eta \\ \xi \end{pmatrix} = \begin{pmatrix} d_A & \zeta\w \\ [B, -] & - d_A \end{pmatrix} \begin{pmatrix} \eta \\ \xi \end{pmatrix}\,,
\end{align}
and its adjoint with respect to the metric in \eqref{cmet2} by
\begin{align}\label{DCadj}
\DC^*\begin{pmatrix} \eta \\ \xi \end{pmatrix} = \begin{pmatrix} d^*_A & -[B, -]\\ \zeta^*  & - d^*_A \end{pmatrix} \begin{pmatrix} \eta \\ \xi \end{pmatrix}\,.
\end{align}
Here, we have noted that the adjoint $[B, -]^*=-[B,-]$, since for any $\gamma,\gamma'\in\Omega^k(M,Ad\,P)$, we have  
\begin{align*}
\langle [B, -]^*(\gamma), \gamma'\rangle =\langle \gamma, [B,\gamma']\rangle = \langle [\gamma, B],\gamma'\rangle = -\langle [B,\gamma],\gamma'\rangle\,,
\end{align*}
having used the adjoint-invariance property of the inner product \eqref{adjinp}.  
Now, the composition 
\begin{align*}
\DC^*\DC \begin{pmatrix} \eta \\ \xi \end{pmatrix} =
\begin{pmatrix} d^*_A d_A  -[B, [B, -] ]& d_A^*(\zeta\wedge-) + [B, d_A-] \\ \zeta^*d_A - d_A^*[B,-]   &  d^*_A d_A + \zeta^*(\zeta\wedge-) \end{pmatrix} \begin{pmatrix} \eta \\ \xi \end{pmatrix} 
\end{align*}
which reproduces exactly the terms within the curly brackets $\{\ \ldots \}$ in \eqref{STsya}-\eqref{STsyb}.  The remaining terms can be expressed as
\begin{align*}
(-1)^{m} *_\CC [\, \eta + \th \xi \,, \,*_\CC\, \FC \,] 
&=(-1)^{m} *_\CC [\,\eta + \th \xi \,,\, *_\CC(F_A + \zeta\,B - \th \, d_A B) ] \\
&=(-1)^{m} *_\CC [\eta + \th \xi\,,  \th *(F_A + \zeta\, B) -*d_A B  ]\\
&=(-1)^{m+1} *_\CC \left\{[\eta,*d_A B] + \th \left( [\xi, *d_A B] + [\eta, *(F_A + \zeta\, B)]\right)\right\}\\
&=[\xi, d_A B]+ (-1)^{m+1} *[\eta, *(F_A + \zeta\, B)] - \th \, *[\eta, *d_A B]
\end{align*}
The last equation follows from the relation $(-1)^{m+1} *[\xi, *d_A B]=[\xi, d_A B]$.  This can be seen by writing $d_A B=\sum \mu_i\otimes\alpha_i$ and $\xi= \sum \xi_i \otimes \alpha_i$ where $\mu_i\in\Omega^{1}(M)$, $\xi_i \in \Omega^0(M)$ and $\{\alpha_i\}$ is a basis of the Lie algebra $\mathfrak{g}=\Lie(G)$.  Then 
\begin{align*}
(-1)^{m+1} *[\xi, *d_A B] 
&= (-1)^{m+1}\sum *[\xi_i \otimes \alpha_i, (*\mu_j)\otimes\alpha_j] \\
&= (-1)^{m+1}\sum **(\xi_i\mu_j) \otimes [\alpha_i,\alpha_j] \\
&= \sum \xi_i\mu_j \otimes  [\alpha_i,\alpha_j] \\
&= [\xi, d_A B] \, .    
\end{align*}

In all, the linearized variation condition \eqref{STsya}-\eqref{STsyb} can be expressed concisely as 
\begin{align}\label{ldefcond}
\DC^*\DC(\eta + \th \xi) + (-1)^{m} *_\CC [\, \eta + \th \xi \,, \,*_\CC\, \FC \,] =0\,. 
\end{align}
Now, under a gauge transformation
\begin{align*}
A+\th B \rightarrow g (A + \th B)g^{-1} + g dg^{-1} =A + \th B - t (D_C\, \alpha) + o(t)
\end{align*}
having substituted on the right-hand-side  $g= e^{t\alpha}$ for $\alpha \in \mathfrak{g}$. 
To quotient out a linear variation that is a gauge transformation, we impose that the deformation satisfy the gauge-fixing condition 
\begin{align}\label{gcond}
\DC^*(\eta + \th \xi)  =0  \,.
\end{align}
Combining \eqref{ldefcond} with \eqref{gcond}, the linearized deformation $(\eta + \th \xi)$ is characterized by solving the differential system 
\begin{align}
(\DC^*\DC+\DC \DC^*)(\eta + \th \xi) + (-1)^{m} *_\CC [\, \eta + \th \xi \,, \,*_\CC\, \FC \,] =0\,.
\end{align}
This is an elliptic system since the cone Laplacian $\Delta_\CC =  \DC^*\DC+\DC \DC^*$ is elliptic.  (With \eqref{DCop}-\eqref{DCadj}, it is easily seen that the contribution to the principal symbol of $\Delta_\CC$ comes only from the standard Laplacian $d^*d+dd^*$ on the diagonal components.)  Hence, this implies that the tangent space of cone Yang-Mills connections modulo gauge equivalence is finite-dimensional.   
\end{proof}

\subsection{Comparison with Yang-Mills solutions}

Since the cone Yang-Mills functional is closely related to the Yang-Mills functional, it is natural to ask about the relationship between their critical points (i.e. solutions).  We shall study this issue starting first with some special cases.

\bigskip 

\noindent{\bf (i) Cone Yang-Mills solutions with $B=0$.}

Consider first the case of cone Yang-Mills solutions with $B=0$.  In this setting, the cone Yang-Mills equations simplify to 
\begin{align}\label{Bzcond}
d_A^*F_A= 0\,, \qquad  \zeta^*F_A=0\,.
\end{align}
Hence, cone Yang-Mills solutions with $B=0$ are a subset of Yang-Mills solutions satisfying additionally the second condition of \eqref{Bzcond}.  

In particular, when $M$ is a K\"ahler manifold and $\zeta=\om$ is the K\"ahler metric, a hermitian Yang-Mills connection that satisfies the conditions
\begin{align*}
F_A^{2,0} = F_A^{0,2}=0\,, \qquad  \om^{n-1}\w F_A =0\,,  
\end{align*}
is also a cone Yang-Mills solution satisfying \eqref{Bzcond} with $B=0$.  This is because $\om^*F_A=0$ is equivalent to $\om^{n-1}\w F_A=0$ for a K\"ahler metric.

\bigskip

\noindent{\bf (ii) Cone Yang-Mills solutions with $\zeta=0$.}

Another special case is that of setting $\zeta=0$.  Notice first for the zero point of the cone Yang-Mills functional satisfying the cone-flat condition \eqref{cflat}, the pair $(A,B)$ is cone-flat if and only if $A$ is a flat connection (i.e. $F_A=0$) and $d_A B=0$.  In general, we have the following:
\begin{lem}\label{zetaz}
Let $M$ be a closed manifold and let $\zeta=0$.  Then $(A,B)$ is a cone Yang-Mills solution if and only if $A$ is a Yang-Mills connection and $d_A B=0$.  
\end{lem}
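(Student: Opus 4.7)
The plan is to show the reverse direction (Yang-Mills plus covariantly constant $B$ implies cone Yang-Mills) by direct substitution, and then to establish the forward direction (cone Yang-Mills implies Yang-Mills and $d_AB=0$) by pairing the second Euler-Lagrange equation with $B$ in the $L^2$ inner product and integrating by parts, exploiting the closedness of $M$.

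For the easy direction, set $\zeta=0$ in \eqref{cYMa}--\eqref{cYMb}. If $A$ is Yang-Mills ($d_A^* F_A = 0$) and $d_A B = 0$, then \eqref{cYMa} reads $0 + [B,0] = 0$, and \eqref{cYMb} reads $d_A^* d_A B = d_A^*(0) = 0$. Both equations are automatically satisfied.

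For the forward direction, with $\zeta=0$ the cone Yang-Mills equations reduce to
\begin{align*}
d_A^* F_A + [B, d_A B] &= 0\,, \\
d_A^* d_A B &= 0\,.
\end{align*}
Take the inner product of the second equation with $B$. Since $d_A^*$ is the formal adjoint of $d_A$ and $M$ has no boundary,
\begin{align*}
0 \;=\; \langle d_A^* d_A B,\, B\rangle \;=\; \langle d_A B,\, d_A B\rangle \;=\; \|d_A B\|^2,
\end{align*}
hence $d_A B = 0$. Substituting back into the first equation yields $d_A^* F_A = 0$, so $A$ is a Yang-Mills connection.

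There is really no main obstacle here beyond observing that closedness of $M$ is what allows the integration by parts with no boundary terms; the argument is essentially the same as the standard one that the kernel of $d_A^* d_A$ on a closed manifold coincides with the kernel of $d_A$. One could equivalently deduce this from the fact that $(A,B)$ being critical for $S_{cYM}$ with $\zeta=0$ means $A$ is critical for $\|F_A\|^2$ and $B$ is critical (at fixed $A$) for $\|d_A B\|^2$, which on a closed manifold forces $d_A B = 0$ by the same adjoint argument.
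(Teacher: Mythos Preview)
Your proof is correct and follows essentially the same approach as the paper's: reduce \eqref{cYMb} to $d_A^*d_AB=0$, pair with $B$ (using compactness) to get $d_AB=0$, and then read off $d_A^*F_A=0$ from \eqref{cYMa}. The only difference is that you also spell out the (trivial) converse direction and the integration-by-parts step, both of which the paper leaves implicit.
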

\begin{proof}
When $\zeta=0$, $\zeta^*$ is a zero map. The second equation of cone Yang-Mills condition \eqref{cYMb} becomes just $d_A^*d_AB=0$, which implies $d_AB=0$ on a compact manifold. The first equation \eqref{cYMa} then simplifies to $d_A^*F_A=0$.  Hence, $A$ must be a Yang-Mills connection with $B$ covariantly constant.
\end{proof}

\bigskip

\noindent{\bf (iii) Cone Yang-Mills solutions when $\zeta$ is a harmonic form.}

Instead of vanishing, suppose $\zeta$ is a harmonic two-form, i.e. $d\zeta=d^*\zeta=0$.  We can obtain a similar statement to Lemma \ref{zetaz} if we require additionally that the cone Yang-Mills solution is cone-flat.
\begin{lem}\label{zetah}
Suppose $(A,B)$ is a cone-flat solution, i.e. a zero point of the cone Yang-Mills functional. If $\zeta$ is a harmonic form, then $A$ is a Yang-Mills connection.
\end{lem}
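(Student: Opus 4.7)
The plan is to directly verify $d_A^* F_A = 0$ by exploiting the two cone-flat conditions together with the coclosedness of $\zeta$. Since $(A,B)$ is cone-flat, I may substitute $F_A = -\zeta B$, so the Yang-Mills equation reduces to showing $d_A^*(\zeta B) = 0$. The key observation is that wedging with a 0-form commutes with $*$, so $*(\zeta B) = (*\zeta)\, B$, and the adjoint derivative becomes a matter of applying $d_A$ to the product $(*\zeta)\, B$.

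Concretely, I would expand
\begin{align*}
d_A^*(\zeta B) \;=\; \pm * d_A\!\left(*(\zeta B)\right) \;=\; \pm * d_A\!\left((*\zeta)\, B\right) \;=\; \pm *\!\left(d(*\zeta)\, B \;+\; (-1)^{m-2}\, *\zeta \wedge d_A B\right).
\end{align*}
The second term vanishes immediately by the cone-flat condition $d_A B = 0$. For the first term, harmonicity of $\zeta$ gives $d^*\zeta = 0$, and since $d^*\zeta = \pm * d*\zeta$ on a 2-form, this is equivalent to $d(*\zeta) = 0$. Hence both terms vanish and $d_A^* F_A = -d_A^*(\zeta B) = 0$, as required.

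There is no serious obstacle here; the argument is a one-line computation once one recognizes that harmonicity of $\zeta$ plus $d_A B = 0$ ensures that $(*\zeta)\, B$ is $d_A$-closed. The only care needed is in tracking the degree-dependent signs when commuting $*$ past a 0-form and when rewriting $d_A^*$ in terms of $*d_A*$; these signs do not affect the vanishing. Note also that $d\zeta = 0$ is not used in this particular direction, so only the coclosedness half of harmonicity actually enters the proof.
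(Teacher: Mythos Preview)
Your proof is correct and follows essentially the same route as the paper: substitute $F_A=-\zeta B$, use $*(\zeta B)=(*\zeta)B$, and then apply the Leibniz rule together with $d(*\zeta)=0$ and $d_AB=0$. Your remark that only the coclosedness of $\zeta$ is actually needed is also accurate.
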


\begin{proof}
By assumption, we have $F_A=-\zeta B$ and $d_AB=0$. So
$$
d_A*F_A=-d_A(*\,\zeta B)=-(d*\zeta)B-(-1)^{m-2}(*\zeta)d_AB=0\,,
$$
implying that $A$ is a Yang-Mills connection.
\end{proof}

\begin{rmk}
When $(M^{2n}, \om)$ is a symplectic manifold and $\zeta=\om$, then a connection satisfying the curvature condition $F_A=-\om B$ such that $d_A B=0$ is called a symplectically-flat connection as introduced in \cite{TZ}.  Symplectically-flat connections are Yang-Mills connections with respect to a compatible metric.  This agrees with Lemma \ref{zetah} above since with respect to a compatible metric, $*\,\om = \om^{n-1}/(n-1)!\,$ which implies $d^*\om =0\,$, i.e. $\om$ is a harmonic form. 
\end{rmk}

\bigskip

\noindent{\bf (iv) Cone Yang-Mills solutions with connection one-form not Yang-Mills.}

Thus far, we have described special cone Yang-Mills solution pairs $(A, B)$ where the connection part $A$ is Yang-Mills.  Such is not the generic case.  Below, we shall give a simple cone-flat solution $(A, B)$ where $A$ is not Yang-Mills.  In order not to contradict Lemma \ref{zetah},  the $\zeta$ in the example below is not harmonic. 

\begin{ex}\label{cYM not YM}
Let $\Sigma$ be a Riemann surface and $A'$ a non-flat Yang-Mills connection on $\Sigma$. Let $\zeta$ be the volume form of $\Sigma$ normalized such that the total volume of $\Sigma$ is one.  Define $M$ to be the three-dimensional circle bundle $\pi:M\to\Sigma$ with Euler class given by $\zeta$.  Since $A'$ is Yang-Mills, we can write its curvature as $F_{A'}=\zeta\Phi$ such that $d_{A'}\Phi=0$. For simplicity, we will also use $\zeta$, $A'$ and $\Phi$ to denote their pullbacks on $M$.

Let $\theta\in\Om^1(M)$ be the global angular one-form of the circle bundle $M$, i.e. $d\theta=\zeta$, and also let $c\in \mathbb{R}$.  For $A=A'+c\,\theta\Phi$ and $B=-(1+c)\Phi$ on the pullback bundle $\pi^*(Ad\,P)$, we find
$$F_A=F_{A'}+c\,\zeta\Phi-c\,\theta d_{A'}\Phi = (1+c)\zeta\Phi = -\zeta B\, ,$$
and
$$d_A B = -(1+c)d_{A'}\Phi-c(1+c)\theta[\Phi,\Phi]=0\, ,$$
that is, $(A, B)$ is a cone-flat solution.  However, the connection form $A$
is not Yang-Mills in general with respect to the volume form on the circle bundle $M$, $dvol_M=\zeta\w \theta$, since
$$d^*_A F_A=*\;d_A *F_A=-*d_A(\theta B)=-*\zeta B=(1+c)\theta\Phi$$
which is non-zero unless $c=-1$.  (We have assumed $A'$ is not a flat connection, and therefore, $F_{A'}=\zeta \Phi\neq 0$.)  Hence, generally, for any $c\neq -1$, $A$ is not a Yang-Mills connection.
\end{ex}

\bigskip

\noindent{\bf (v) Yang-Mills connections that can not be a part of a cone Yang-Mills solution.}

For a cone Yang-Mills solution, $(A,B)$, we have seen that $A$ need not be a Yang-Mills connection.  In the reverse direction, we can ask if given a Yang-Mills connection $A$, will there always exist a $B$ such that $(A,B)$ is a cone Yang-Mills solution?  The answer is no, as is shown in the example below.

\begin{ex}
Let $M=T^4=\mathbb{R}^4/2\pi\mathbb{Z}^4$ be the 4-torus described by identifications $x_i\sim x_i+2\pi n_i$ for $i=1,2,3,4$ and $n_i \in\mathbb{Z}$.  Let $\zeta=dx_1\wedge dx_2+dx_3\wedge dx_4$. We will take the Riemannian metric to be 
$$
g=(dx_1)^2+(dx_2)^2+\frac{1}{f}(dx_3)^2+f\,(dx_4)^2
$$
where $f=\dfrac{3+2\sin 2x_2\cos x_3}{1-\frac{1}{2}\sin 2x_2\cos x_3}$. 

Consider a principal $U(1)$ bundle over $M$ with the circle coordinate identified by
\begin{align*}
y\sim y+2\pi n_5-n_1x_3\,
\end{align*}
for $n_5\in\mathbb{Z}$.  The global connection one-form on the $U(1)$ bundle can be taken to be
$$
A=dy+\frac{1}{2\pi}x_1\,dx_3+\frac{1}{4\pi}\sin 2x_2\sin x_3 \,dx_1.
$$
The curvature two-form is then given by $$
F_A=dA=-\frac{1}{2\pi}\cos 2x_2\sin x_3 \;dx_1\w dx_2+\frac{1}{2\pi}(1-\frac{1}{2}\sin 2x_2\cos x_3)dx_1\w dx_3\,.
$$
Moreover, 
\begin{align*}
    d*F_A &= \frac{1}{2\pi}d\left[-\cos 2x_2\sin x_3\; dx_3\w dx_4-(1-\frac{1}{2}\sin 2x_2\cos x_3)f\;dx_2\w dx_4\right] \\
    &= \frac{1}{2\pi}d\left[-\cos 2x_2\sin x_3\; dx_3\w dx_4-(3+2\sin 2x_2\cos x_3)dx_2\w dx_4\right] = 0\,.
\end{align*}
Thus, $F_A$ satisfies the abelian Yang-Mills equation.  However, there does not exist a function $B$ on $T^4$ such that $(A,B)$ satisfy the cone Yang-Mills equations \eqref{cYMa}-\eqref{cYMb}.  Equation \eqref{cYMa} implies in the abelian case  $d^*(F_A+\zeta B)=0\,$. But since we know already $d^*F_A=0\,$, this means $B$ is required to satisfy
\begin{align} \label{bquest}
0=d*(\zeta B)=d(\zeta B)=\zeta\wedge dB\,.
\end{align}
With $\zeta$ being a non-degenerate two-form, \eqref{bquest} gives the condition $dB=0$.

However, imposing the second cone Yang-Mills equation \eqref{cYMb} results in a contradiction.  For if $dB=0$, \eqref{cYMb} reduces to the condition $\zeta^*F_A+\zeta^*\zeta B=0$. This implies in particular
\begin{align}\label{Bquest2}
B = \frac{1}{2}\zeta^*\zeta B = -\frac{1}{2}\zeta^*F_A = \frac{1}{4\pi}\cos 2x_2\sin x_3\,,
\end{align}
and clearly, $dB\neq 0$, which contradicts the condition from \eqref{bquest}.  Hence, there is no $B$ that can satisfy the cone Yang-Mills equations.
\end{ex}

\subsection{Uniqueness of cone Yang-Mills solutions for a fixed connection}

In general, given an  arbitrary connection $A$, there does not exist a $B$ that would make $(A,B)$ a cone Yang-Mills solution.  To illustrate this in a simple example, let $M$ be a closed manifold and $\zeta=0\,$.   Then by Lemma \ref{zetaz}, we know that  $A$ must be a Yang-Mills connection and $d_A B =0\,$.  Hence, there exists no $B$ that would give a cone Yang-Mills solution if $A$ is not Yang-Mills.

However, it is interesting to ask that given a cone Yang-Mills solution $(A,B)$, how many different $B$'s with $A$ fixed would also be a cone Yang-Mills solution? In the case of $\zeta=0$ and $M$ closed, if $(A,B)$ is a cone Yang-Mills solution, then so are all $(A,B+\Phi)$ with $d_A\Phi=0$.
For $\zeta\neq 0$, we are able to obtain results in two special cases.  First, in two dimensions and suppose $A$ is a Yang-Mills connection, we have the following:
\begin{prop}
    Suppose $M$ is a closed Riemann surface and $\zeta$ is its volume form. For each Yang-Mills connection $A$, there exists a unique $B$ such that $(A,B)$ is a cone Yang-Mills solution. In fact, such a pair $(A,B)$ is always cone-flat.
\end{prop}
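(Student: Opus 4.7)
The plan is to reduce the cone Yang-Mills system on a Riemann surface to a single elliptic equation in $B$ whose unique solution happens to make $(A,B)$ cone-flat, yielding existence, uniqueness, and cone-flatness simultaneously.  Since $M$ is two-dimensional and $\zeta$ is its volume form, $*\zeta = 1$ and every two-form is a scalar multiple of $\zeta$; I would write $F_A = \Phi\,\zeta$ with $\Phi = *F_A \in \Omega^0(M, Ad\,P)$, and note that the Yang-Mills condition $d_A^* F_A = 0$ is equivalent to $d_A \Phi = 0$.  The pointwise identities
\begin{align*}
\zeta^* F_A = \Phi, \qquad \zeta^*(\zeta B) = B
\end{align*}
follow directly from $\zeta^* = *\zeta\,*$ acting on two-forms in dimension two.

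Substituting these into \eqref{cYMb}, the second cone Yang-Mills equation should collapse to
\begin{align*}
(1 + \Delta_A)\,B = -\Phi,
\end{align*}
where $\Delta_A = d_A^* d_A$ is the covariant Laplacian on $Ad\,P$-valued functions.  On the closed manifold $M$, $1 + \Delta_A$ is a self-adjoint, positive definite elliptic operator and hence invertible.  Since any cone Yang-Mills pair $(A,B)$ must satisfy \eqref{cYMb}, this forces $B$ to be uniquely determined.  Moreover $\Delta_A \Phi = 0$ from $d_A \Phi = 0$, so the unique solution of the reduced equation is $B = -\Phi = -*F_A$.

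Finally, I would verify that this $B$ actually produces a cone-flat pair and therefore automatically solves the full system.  Direct substitution gives $F_A + \zeta B = \Phi\,\zeta - \zeta\,\Phi = 0$ and $d_A B = -d_A \Phi = 0$, so both \eqref{cYMa} and \eqref{cYMb} vanish trivially.  The only step that requires genuine bookkeeping is the reduction of \eqref{cYMb} to $(1+\Delta_A)B = -\Phi$, where signs and Hodge-star identities in dimension two must be handled correctly; once that is in place, existence and uniqueness are immediate from the spectral theory of $1+\Delta_A$, and cone-flatness is a consequence of the particular form of the unique solution rather than an extra hypothesis.
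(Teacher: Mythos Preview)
Your proof is correct and follows essentially the same approach as the paper: both reduce \eqref{cYMb} to $\Phi + B + d_A^* d_A B = 0$ using $\zeta^* F_A = \Phi$ and $\zeta^*(\zeta B) = B$, then identify $B = -\Phi$ as the unique solution and observe it is cone-flat. The only cosmetic difference is in the uniqueness step: you invoke invertibility of the positive operator $1+\Delta_A$ directly, whereas the paper applies $d_A$ to the reduced equation and uses $\langle d_A B, d_A d_A^* d_A B\rangle = -\|d_A B\|^2 = \|d_A^* d_A B\|^2$ to force $d_A B = 0$ first; both arguments rest on the same positivity of $d_A^* d_A$.
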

\begin{proof}
    Since $A$ is a Yang-Mills connection on a Riemann surface, $F_A=\zeta \Phi$ with $d_A\Phi=0$. So we can choose $B=-\Phi$ and then $(A,B)$ is a cone-flat solution.  We will show below that this is the unique cone Yang-Mills solution for a fixed $A$, a Yang-Mills connection.
        
Generally, suppose $(A,B)$ is a cone Yang-Mills solution. As $\zeta$ is a volume form, the condition $\zeta^*(F_A+\zeta B)+d_A^*d_AB=0$ becomes
    \begin{align}\label{2deq}
        \Phi+B+d_A^*d_A B = 0\,.
    \end{align}
Let $d_A$ act on both sides.  This implies $d_A B=-d_Ad_A^*d_A B$, and therefore,
\begin{align*}
\langle d_AB,d_Ad_A^*d_A B \rangle = -\|d_AB\|^2 \leq 0\,.
\end{align*}
On the other hand,
\begin{align*}
    \langle d_AB,d_Ad_A^*d_A B \rangle = \langle d_A^*d_AB,d_A^*d_A B \rangle = \|d_A^*d_AB\|^2\geq 0.
\end{align*}
So $\langle d_AB, d_Ad_A^*d_A B \rangle$ has to vanish. It follows that $\|d_AB\|^2=0$, which implies, $d_AB=0$, and by \eqref{2deq}, $B=-\Phi$. So such $B$ is unique when $A$ is Yang-Mills.
\end{proof}

Another special case where we can constrain $B$ is when $\zeta$ is the symplectic form on a symplectic manifold. If the structure group is abelian, then $B$ must be unique.

\begin{prop}
Suppose $A$ is a connection on the associated adjoint bundle $Ad\,P$ over a symplectic manifold $(M^{2n},\omega)$ and the Riemannian metric is compatible with $\om$. Take $\zeta=\omega$. Then there is at most one $B$ that satisfies $[B, d_A B]=0$ and such that $(A,B)$ is a cone Yang-Mills solution. In particular, if the structure group is abelian, there is at most one cone Yang-Mills solution pair $(A,B)$ for any given connection $A$.
\end{prop}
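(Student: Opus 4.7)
The plan is to compare two cone Yang-Mills solutions $(A, B_1)$ and $(A, B_2)$ sharing the same connection $A$ and show $B_1 = B_2$. Setting $\Phi := B_1 - B_2 \in \Om^0(M, Ad\,P)$ and subtracting the second cone Yang-Mills equation \eqref{cYMb} for the two pairs, the $\zeta^* F_A$ terms cancel and we obtain the linear equation
\[
\zeta^*(\zeta\,\Phi) + d_A^* d_A \Phi = 0.
\]

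The central ingredient is the almost-K\"ahler identity $|\om|^2 = n$, which holds pointwise whenever the Riemannian metric is compatible with the symplectic form $\om$. By the adjoint property of $\zeta^*$, for any $\Phi \in \Om^0(M, Ad\,P)$ we have
\[
\langle \zeta^*(\zeta\,\Phi),\,\Phi \rangle = \langle \zeta\,\Phi,\,\zeta\,\Phi \rangle = \int_M |\om|^2\, |\Phi|^2\, dV = n\,\|\Phi\|^2,
\]
giving $\zeta^*(\zeta\,\Phi) = n\,\Phi$. The displayed equation thus reduces to $n\,\Phi + d_A^* d_A \Phi = 0$, and pairing with $\Phi$ in $L^2$ on the (assumed closed) manifold $M$ followed by integration by parts yields
\[
n\,\|\Phi\|^2 + \|d_A \Phi\|^2 = 0.
\]
Both terms are non-negative and $n \geq 1$, forcing $\Phi = 0$, hence $B_1 = B_2$. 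In the abelian setting, the bracket $[B, d_A B]$ vanishes identically, so the condition $[B, d_A B] = 0$ in the main statement is automatic and uniqueness of $B$ for a fixed $A$ follows immediately.

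Somewhat unexpectedly, the hypothesis $[B_i, d_A B_i] = 0$ never enters the argument above, since only the second cone Yang-Mills equation is used; that hypothesis is what would decouple the first equation \eqref{cYMa} to $d_A^*(F_A + \zeta B_i) = 0$, but this fact is not required for uniqueness. The only step of substance is the almost-K\"ahler computation $\zeta^*\zeta = n \cdot \mathrm{id}$ on $\Om^0(M, Ad\,P)$, which follows from $\om \w *\om = n\, dV$; the remainder is a one-line positivity argument and poses no real obstacle.
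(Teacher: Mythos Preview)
Your argument is correct under the additional assumption that $M$ is closed (which you flag explicitly), and it takes a genuinely different route from the paper's proof. The paper uses \emph{both} cone Yang-Mills equations: from \eqref{cYMa}, the hypothesis $[B_i, d_A B_i]=0$ reduces it to $d_A^*(F_A+\omega B_i)=0$, and since $*\,\omega = \tfrac{1}{(n-1)!}\,\omega^{n-1}$ this gives $\omega^{n-1}\wedge d_A B_1 = \omega^{n-1}\wedge d_A B_2$; the pointwise Lefschetz isomorphism $\omega^{n-1}:\Omega^1\to\Omega^{2n-1}$ then forces $d_A B_1 = d_A B_2$. Substituting into the second equation yields $n(B_1-B_2)=0$ directly, with no integration by parts. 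Thus the paper's argument is entirely local and does not require $M$ to be compact, whereas yours does.

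On the other hand, your approach reveals something the paper's does not: when $M$ \emph{is} closed, the bracket hypothesis $[B,d_AB]=0$ is in fact superfluous, since equation \eqref{cYMb} alone already determines $B$ uniquely via your positivity argument. So each approach buys something different---the paper's works on open manifolds but genuinely needs the bracket condition (to bring the first equation into play), while yours is restricted to closed $M$ but proves a sharper statement there.
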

\begin{proof}
Let $\dim M=2n$. With respect to a compatible metric, $*\,\zeta= *\,\om=\frac{1}{(n-1)!}\om^{n-1}$, and $\zeta^*\zeta=n$. 

Suppose both $(A, B_1)$ and $(A, B_2)$ are cone Yang-Mills solutions.  Assume also $[B_1, d_AB_1]=[B_2, d_AB_2]=0$ which is identically true when the structure group is abelian. Then \eqref{cYMa}-\eqref{cYMb} imply for $B_1$,
\begin{align*}
-*d_A(*F_A+\frac{1}{(n-1)!}\om^{n-1} B_1) = 0\,, \\
\zeta^* F_A+n B_1+d_A^*d_AB_1 =0\,,
\end{align*}
and $B_2$ satisfies identical equations. So by the first equation, we have
$$
\om^{n-1}\w d_A B_1=\om^{n-1}\w d_A B_2.
$$
It follows that $d_AB_1=d_AB_2$ because $\om^{n-1}:\Omega^1(M)\to\Omega^{2n-1}(M)$ is an isomorphism. With this, the second equation becomes 
$$
n(B_1-B_2)=d_A^*d_A(B_2-B_1)=0\,,
$$
which implies, $B_1=B_2$.
\end{proof}

The above proposition avoids considering the term $[B, d_AB]$. If $[B, d_AB]=0$ holds true for all cone Yang-Mills solutions $(A,B)$, then we would have obtained the uniqueness of $B$ when $\zeta$ is the symplectic structure. But as we shall see in Example \ref{TaubN} in the next section, a cone Yang-Mills solution in general need not satisfy $[B, d_AB]= 0$.

\section{Special solutions of the cone Yang-Mills functional}

\subsection{Two-dimensional solutions with abelian gauge group}

Consider cone Yang-Mills solutions on a closed Riemann surface with abelian structure group. In this case, the Euler-Lagrange equations reduce to
\begin{align}
d^*(F_A+\zeta B)&= 0\,, \label{2da1}\\
\zeta^*\!\left(F_A+\zeta B\right)+d^*dB &=0\,. \label{2da2}
\end{align}
The first equation \eqref{2da1} implies that $*(F_A+\zeta B)$ is a constant $c$. So we can assume that 
\begin{align}\label{FAbcw}
F_A+\zeta B=c\,\omega\,,
\end{align} 
where $\omega$ is the volume form of $M$. By Hodge decomposition, we can write 
$$\zeta=c'\omega+dd^*(f\omega)=c'\omega-d*df\,,$$ 
where $c'$ is a constant and $f$ is a function on $M$.

For any function $\phi$ on $M$, we have
\begin{align*}
    \langle \phi,\zeta^*\!\left(F_A+\zeta B\right) \rangle & = \langle \zeta \phi, \left(F_A+\zeta B\right) \rangle = \langle \phi(c'\omega-d*df),c\,\omega \rangle \\
    &= \int_M c\phi(c'\omega-d*df) \\
    &= \int_M cc'\phi\,\omega-\int_M d(c\phi *df)+\int_M c\,d\phi\wedge *\,df \\
    &= \langle \phi,cc'\rangle+\langle d\phi, c\,df \rangle  \\
    &= \langle \phi, cc'+c\,d^*df\rangle
\end{align*}
and therefore,
\begin{align}\label{xisimp}
\zeta^*\!\left(F_A+\zeta B\right) = cc'+c\,d^*df\,.
\end{align} 
Plugging this into \eqref{2da2}, we find that $cc'$ is a $d^*$-exact constant; hence, $cc'$ must be zero.

If $c'\neq 0$, then $c$ must vanish and then $F_A+\zeta B=0$. By \eqref{2da2}, $d^*dB=0$ which implies $dB=0$ since $M$ is closed. Thus, we have obtained the following statement.

\begin{prop}
    Suppose $M$ is a closed Riemann surface, $\zeta$ is a non-exact two-form on $M$, and the structure group is abelian. Then all cone Yang-Mills solutions are cone-flat.
\end{prop}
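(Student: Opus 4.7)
The plan is to continue directly from the setup established in the paragraphs immediately preceding the proposition. There, the first Euler--Lagrange equation has already been integrated to $F_A + \zeta B = c\,\omega$ for some constant $c$, and the Hodge decomposition $\zeta = c'\omega - d\!*\!df$ combined with \eqref{xisimp} has reduced the second Euler--Lagrange equation to
\begin{equation*}
cc' + c\,d^*df + d^*dB = 0.
\end{equation*}
My entire task is to rule out $c \neq 0$; once $c = 0$ is forced, the two cone-flat conditions will follow essentially for free.

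To kill $c$, I would first identify the constant $c'$ by integrating the Hodge decomposition over $M$: the exact piece contributes nothing, so $c' = (\int_M \zeta)/\mathrm{Vol}(M)$. On a closed Riemann surface $H^2(M;\mathbb{R})$ is one-dimensional and generated by $[\omega]$, so a closed two-form is exact precisely when its integral vanishes. The non-exactness hypothesis therefore gives $c' \neq 0$. I would then pair the displayed equation with the constant function $1$: the two $d^*$-exact terms drop out because $\langle 1, d^*\alpha\rangle = \langle d1,\alpha\rangle = 0$, leaving $cc'\cdot\mathrm{Vol}(M) = 0$, and hence $c = 0$.

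With $c = 0$, the relation $F_A + \zeta B = c\,\omega$ immediately gives the first cone-flat equation $F_A + \zeta B = 0$. Feeding this back into \eqref{2da2} annihilates the $\zeta^*$ term and reduces it to $d^*dB = 0$; pairing with $B$ on the closed surface gives $\|dB\|^2 = \langle B, d^*dB\rangle = 0$, so $dB = 0$. Since the structure group is abelian, $d_A B = dB$, and the second cone-flat equation holds as well. The only genuinely conceptual ingredient is the identification of non-exactness of $\zeta$ with $c' \neq 0$, which is pure Hodge theory on a Riemann surface; everything else is a direct integration and a standard Bochner-style pairing, so no significant obstacle is expected.
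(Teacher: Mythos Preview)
Your proposal is correct and follows essentially the same approach as the paper: both derive $cc' + c\,d^*df + d^*dB = 0$, observe that a constant equal to a $d^*$-exact term must vanish (your pairing with $1$ is exactly this), and use non-exactness of $\zeta$ to conclude $c'\neq 0$ and hence $c=0$. You are simply more explicit about why $c'\neq 0$ and why $d^*dB=0$ forces $dB=0$, but the logic is identical.
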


Now we consider the case $c'=0$, that is,  $\zeta=-d*df$ is a $d$-exact form.  Together, \eqref{2da2} and \eqref{xisimp} imply $d^*d(cf+B)=0$, and so $cf+B$ is a constant. We can thus write $B=-cf+c''$ for some constant $c''$. Hence, by \eqref{FAbcw}, $F_A=c\,\omega-\zeta B=c\,\omega+(c''-cf)d*df$. Therefore, we find that the constants $c$ and $c''$ parametrize the cone Yang-Mills solutions. And finally, in the special case where $f=0$, i.e. $\zeta$ vanishes, the critical points must satisfy $F_A=c\,\om$ and $B=c''$.

\subsection{Three-dimensional solutions from duality relations}\label{3dsol}

Recall in four dimensions, there are special Yang-Mills solutions that satisfy the first-order self-dual/anti-self-dual conditions, $*\,F_A = \pm\, F_A\,$. The intuition that cone Yang-Mills functional can be interpreted as a dimensional reduction of the Yang-Mills functional suggests an analogous duality condition $*_{\mathcal{C}}\,\CF_\CA=\pm\,\CF_\CA$ over 3-manifolds, where $\CF_\CA=(F_A+\zeta B)-\theta d_A B$. With the $*_{\mathcal{C}}$ acting on $\tC^k(\zeta)$ given by \eqref{conestar}, we have  
\begin{align*}
*_{\mathcal{C}}\,\CF_\CA &= *_{\mathcal{C}}\left[(F_A +\zeta B) - \theta d_A B\right]\\
& = * (-d_A B) + \theta \left[* (F_A + \zeta B)\right].
\end{align*}
Hence, we find that $*_{\mathcal{C}}\,\CF_\CA=\mp\,\CF_\CA$ implies
\begin{align}\label{3duality}
F_A + \zeta B = \pm * d_A B
\end{align}
Note that when $\zeta=0$, the condition becomes the Bogomolny monopole equations.  

We will show that a solution of  
\eqref{3duality} is automatically a solution of the cone Yang-Mills equations in two different ways.  First, we can check directly that  \eqref{3duality} implies the cone Yang-Mills equations \eqref{cYMa}-\eqref{cYMb}.  Applying $d^*_A$ to  \eqref{3duality}, we find the following:
\begin{align*}
d_A^*(F_A+\zeta B) &= \pm\, d_A^* *d_A B\\
&= \pm * d_A d_A B = \pm *[F_A , B] \\
& = [(d_A B - *\,\zeta B), B] = [d_A B, B]
\end{align*}
where we have used the three-dimensional relations $** =1\,$, and  $d_A^* = (-1)^k* d_A\, *$ acting on a $k$-form.  The above implies the first cone Yang-Mills equation \eqref{cYMa}, $d_A^*(F_A+\zeta B)+[B, d_AB] = 0\,$.  Furthermore, it also follows from \eqref{3duality} that
\begin{align*}
d_A^* d_A B & = - * d_A (*\,d_A B) = \mp * d_A (F_A + \zeta B)\\
&= \mp *\zeta\w  d_A B = \mp *\zeta\, * (*d_A B)\\
&= -\zeta^*(F_A + \zeta B)
\end{align*}
which implies the second cone Yang-Mills equation \eqref{cYMb}, 
$\zeta^*\!\left(F_A+\zeta B\right)+d_A^*d_AB =0\,$.

In the second method, analogous to the standard Yang-Mills instanton argument, we can express the three-dimensional cone Yang-Mills functional in the following manner:
\begin{align}\label{3bound}
\|\CF_\CA\|_{\mathcal{C}}^2 &=  \int_M c_G\,\tr [(F_A+\zeta B)\wedge *(F_A+\zeta B) \,+\, d_A B\wedge *\,d_A B)] \nonumber\\
&= \int_M c_G \,\tr \left[(F_A+\zeta B\mp*\,d_A B)\wedge *(F_A+\zeta B\mp*\,d_A B)\right] \;\pm\; 2\int_M c_G\, \tr \left[(F_A+\zeta B)\wedge d_A B\right] \nonumber\\    
&\geq \pm \:2\int_M c_G \, \tr \left[(F_A+\zeta B)\wedge d_A B\right].
\end{align}
The equality holds only when $F_A+\zeta B= \pm *d_A B$.   Importantly, the bounding integral is a boundary term  
\begin{align}
Q&=\int_M c_G\,\tr \left[(F_A+\zeta B)\w d_A B\right] = \int_M c_G\,\tr \left[d\left(F_A\, B + \dfrac{1}{2}\, \zeta\, B^2\right) - (d_A F_A) B + \zeta\w B[A,B]\right] \nonumber\\
&=\int_{\partial M} c_G\,\tr \left[F_A\, B + \dfrac{1}{2}\,\zeta\, B^2\right]\label{mcharge}
\end{align}
which is the action of a two-dimensional topological $BF$-type theory.  We note that any infinitesimal local variation of $(A,B)$ away from the boundary does not affect the bound.  Hence, $F_A+\zeta B= \pm *d_A B$ must be a critical point of the cone Yang-Mills functional. 

The duality-type condition of \eqref{3duality} helps simplify the search for cone Yang-Mills solutions in three dimensions. Notably, it is first-order and hence more tractable compared with the second-order cone Yang-Mills equations \eqref{cYMa}-\eqref{cYMb}.  As mentioned, when $\zeta=0$, the condition becomes the Bogomolny equations and then the known three-dimensional Bogomolny-Prasad-Sommerfeld (BPS) monopole solutions are trivially cone Yang-Mills solutions with $Q$ of \eqref{mcharge} being proportional to the magnetic monopole charge (for a review, see \cite{Prasad}).
More generally, for $\zeta\neq 0$, we can look for self-dual/anti-self-dual Yang-Mills instanton solutions in four dimensions on spaces that can be described as a circle bundle over a three-manifold.  If the four-dimensional Yang-Mills instanton solutions are invariant under the $S^1$ circle action, then we can dimensionally reduce over the circle and obtain solutions that satisfy \eqref{3duality}.  We give such an example below coming from the Taub-NUT gravitational instanton.

\begin{ex}\label{TaubN}
The Taub-NUT gravitational instanton solution can be thought of as a self-dual Yang-Mills solutions of a tangent bundle with structure group $SU(2)\subset SO(4)$.  With a point removed, the four-dimensional space can be considered as a circle bundle, $S^1\to X \to \mathbb{R}^3 - \{0\}$ (see, for example, the description in  \cite{LeBrun}).   Dimensionally reducing this solution leads to a non-abelian cone Yang-Mills solution on $M=\mathbb{R}^3 - \{0\}$ satisfying \eqref{3duality}.

We start with the Taub-NUT metric written in Gibbons-Hawking form:
\begin{align}\label{TNmetric}
ds_{TN}^2 = e^{2\phi}\left((dx^1)^2 + (dx^2)^2 + (dx^3)^2\right) + e^{-2\phi}\theta^2 \,,  
\end{align}
where $e^{2\phi}=1+2/r$ is a positive function on $\mathbb{R}^3 - \{0\}$ with $r^2=(x^1)^2+(x^2)^2+(x^3)^2$ 
and  the $d$-closed two-form $\zeta$ is defined to be
\begin{align}
\zeta &= d\theta = \pm\, *_0d( e^{2\phi}) \nonumber\\
&
= \pm\, \epsilon_{ijk}\,\pa_k\phi \, e^{2\phi}\; dx^i\w dx^j \label{zetadef}
\end{align} 
with $*_0$  being the Euclidean Hodge star on $M=\mathbb{R}^3-\{0\}$.   The requirement that $d\zeta =0$
gives the following condition on $\phi$ on $\mathbb{R}^3-\{0\}$:
\begin{align}\label{phicond}
\sum_{k=1}^{3} \left[ \partial^2_k\phi + 2 (\partial_k \phi)^2 \right] = 0 \,.
\end{align}

For the Taub-NUT metric in \eqref{TNmetric}, we have the following basis of moving frame (i.e. orthonormal frame) of 1-forms 
\begin{align*}
e^i = e^\phi dx^i \quad i=1,2,3\,, \quad \text{ and} \qquad e^4= e^{-\phi}\theta\,.
\end{align*}
These result in the following connection 1-forms (for a reference, see \cite{Oh})
\begin{align}\label{ofc1}
\om^i{}_j&= -\om^j{}_i= e^{-\phi}\left(-\pa_i \phi \,e^j + \pa_j\phi \,e^i \mp\, \epsilon_{ijk}\pa_k\phi\, e^4\right)\\\label{ofc2}
\om^i{}_4& = -\om^4{}_i= e^{-\phi}\left(\pm\epsilon_{ijk} \pa_j\phi \,e^k+ \pa_i \phi \,e^4\right)
\end{align}
which satisfy the torsionless condition $de^r + \om^r{}_s \w e^s=0$.  (Regarding indices, we will let the indices $i,j, k,l,p,q$ take values in the set $\{1,2,3\}$ and $r,s,t,u$ take values in the set $\{1,2,3,4\}$.)  Notice that these connection 1-forms are also anti-self-dual/self-dual in the sense that
\begin{align*}
\om_{rs} = \mp\, \dfrac{1}{2}\,\epsilon_{rstu}\,\om_{tu}\,.
\end{align*}
Hence, the structure group of the bundle reduces to an $SU(2)$ subgroup of $SO(4)$.  It can be checked that the resulting Taub-NUT curvature two-form $R^r{}_s = d\om^r{}_s + \om^r{}_t \w \om^t{}_s$ is correspondingly anti-self-dual/self-dual.

Now to perform a dimensional reduction,
we take the metric on $X$ to be 
\begin{align}
ds_X^2=e^{2\phi}ds_{TN}^2 = e^{4\phi}\left((dx^1)^2 + (dx^2)^2 + (dx^3)^2\right) + \theta^2 \,,
\end{align}
conformally scaled by the $e^{2\phi}$ factor in order to match the desired metric form of $ds^2_X = ds^2_M + \theta^2$ as in \eqref{Xmetric}.  We note that the four-dimensional Yang-Mills anti-self-dual/self-dual curvature equation is invariant under an overall conformal scaling of the metric.  Therefore, for the cone pair $(A, B)$ on $M=\mathbb{R}^3 - \{0\}$, we can just read off from \eqref{ofc1}-\eqref{ofc2} by setting $\CA^r{}_s = (A + \theta B)^r{}_s=\om^r{}_s\,$.  We find
\begin{align}
A^i{}_j &= -A^j{}_i= -\pa_i\phi\, dx^i + \pa_j\phi\, dx^j\,, \ \ \qquad B^i{}_j = -B^j{}_i=\mp\,\epsilon_{ijk} e^{-2\phi}\pa_k\phi\,,\label{nAsol1}\\
A^i{}_4&=-A^4{}_i=\pm\,\epsilon_{ijk}\pa_j\phi dx^k\,,\qquad\qquad\quad B^i{}_4=-B^4{}_i= e^{-2\phi}\pa_i\phi\,.\label{nAsol2}
\end{align}
These lead to the following:
\begin{align*}
(F_A)^i{}_j&= -\pa_i\pa_k\phi \;dx^k \w dx^j + \pa_j\pa_k\phi \;dx^k \w dx^i - \epsilon_{ikl}\epsilon_{jpq}\,\pa_k\phi\pa_p\phi\; dx^l\w dx^q \\ & \qquad + \pa_i\phi\pa_k\phi \;dx^k\w dx^j - \pa_j\phi\pa_k\phi\; dx^k \w dx^i - (\pa_k\phi)^2 dx^i\w dx^j\,, \\
(F_A)^i{}_4&=\pm\left( \pa_i\phi\,\epsilon_{jkl}\pa_j\phi\; dx^k\w dx^l - \epsilon_{ijk} \pa_j\pa_l\phi\; dx^k\w dx^l \right) ,\\
(d_A B)^i{}_j&= \pm\, e^{-2\phi}\left[2\, \epsilon_{ijk}\,\pa_k\phi\pa_l\phi\; dx^l - \epsilon_{ijk}\pa_k\pa_l\phi\;dx^l +2 \left(\pa_i\phi\,\epsilon_{jkl}-\pa_j\phi\,\epsilon_{ikl}\right)\pa_k\phi\;dx^l  \right],\\
(d_A B)^i{}_4&=e^{-2\phi}\left(-4\,\pa_i\phi\pa_j\phi\;dx^j + 2(\pa_j\phi)^2 dx^i + \pa_i\pa_j\phi\;dx^j\right).
\end{align*}
With \eqref{zetadef}-\eqref{phicond}, it can be straightforwardly checked that the above solution satisfies $F_A+\zeta B = \pm\, * d_A B$ where the Hodge star is defined with respect to the three-dimensional metric
\begin{align*}
ds^2_M = e^{4\phi}\left((dx^1)^2 + (dx^2)^2 + (dx^3)^2\right)\,.
\end{align*} 
Hence, $(A, B)$ as defined in \eqref{nAsol1}-\eqref{nAsol2} gives us a highly non-trivial, non-abelian solution of the cone Yang-Mills equations.  Moreover, for this solution, it can be checked that $[B, d_A B]\neq 0$.  

\end{ex}

\smallskip

\begin{rmk}
We can consider dimensionally reducing the three-dimensional duality condition $*_{\mathcal{C}}\,\CF_\CA=\mp\,\CF_\CA$ over another circle down to a two-dimensional manifold $N$. For the metric, we will assume $ds_M^2 = ds_N^2 + \chi^2$
where $\chi$ is the global connection one-form (i.e.~the global angular form) of the circle bundle $M$ over $N$.  The dimensionally reduced three-dimensional connection can then be expressed as $A + \chi b$ where $A$ is now a connection form over $N$ and  $b\in\Om^0(N,Ad\,P)$.  The three-dimensional cone curvature then takes the form
\begin{align*}
    \CF_\CA = F_{A+\chi b}+\zeta B-\theta d_{A+\chi b} B &= F_A+\zeta B+ (d\chi) b -\chi d_A b-\theta d_A B-\theta \chi [b,B]\,,
\end{align*}
with 
\begin{align*}
*_{\mathcal{C}}\, \CF_\CA = \theta \chi \left[*\left(F_A + \zeta B + (d\chi) b\right)\right] - \theta * d_A b + \chi * d_A B - * [b, B]\,.
\end{align*}
The condition $*_{\mathcal{C}}\,\CF_\CA=\mp\, \CF_\CA$ then implies the following two equations:
\begin{align}\label{2Dab}
    F_A+\zeta B + (d\chi)b &= \pm * [b,B]\,,\\
    *\,d_A B &= \pm\, d_A b \label{2Dabb}\,.
\end{align}
In the case where the circle is trivially fibered over $N$, then we can set $d\chi=0$.  Moreover, when $\zeta=d\chi=0\,$, the above equations become equivalent to Hitchin's equations.
\end{rmk}

\section{Classification of cone-flat bundles with respect to a non-degenerate two-form}

In this section, we study the question what bundles can carry a pair $(A, B)$ satisfying the cone-flat condition:
\begin{align}\label{cflat1}
F_A+\zeta B=0\,,\qquad d_AB=0\,.    
\end{align}

Now if $\zeta=0$, the cone-flat condition reduces to (1) $F_A=0$, i.e. $A$ is a flat connection, and (2) $d_A B=0$, that is, $B$ is a covariantly constant section.  Since we can always set $B=0$, the $\zeta=0$ cone-flat bundles are just the usual flat bundles, and flat bundles are well-known to be classified by the conjugacy classes of homomorphisms from $\pi_1(M)$ to the structure group $G$ of the fiber bundle (see, for example \cite{Morita}).  As the classification for the $\zeta=0$ case is well-understood, we will in the following always assume that $\zeta\neq 0$.  

To simplify our consideration, we will additionally assume that $\zeta$ is a non-degenerate two-form.  Being $d$-closed and non-degenerate, $\zeta$ is then a symplectic structure and $M$ must be even-dimensional.  Additionally, $\zeta$ being non-degenerate leads to two simplifications in considering cone-flat pairs $(A, B)$.  First, non-degeneracy ensures that $\zeta$ nowhere vanishes, and hence, the first cone-flat equation $F_A=-\zeta B$, implies $B$ is determined by the connection $A$.    Second, the second cone-flat equation $d_AB=0$ is automatically satisfied and redundant when $\dim M \geq 4$.  This is because the first cone-flat equation together with the Bianchi identity imply
\begin{align} \label{BIcf}
d_A F_A = - \zeta  \w d_A B =0\,.
\end{align}  
If $\zeta$ is non-degenerate and $\dim M\geq 4$, then \eqref{BIcf} implies $d_A B = 0\,$.  In all, $\zeta$ is non-degenerate simplifies our consideration of cone-flat pairs $(A, B)$ to checking only that that the curvature is proportional to $\zeta$, i.e. $F_A=-\zeta \, B$,  and only if $\dim M = 2$, we need to also check that the resulting $B$ is covariantly constant.

When $M$ is a two-dimensional Riemann surface, $\zeta$ being non-degenerate implies that it is the product of a nowhere vanishing function times the volume form.  If we take $\zeta$ to be the volume form, then we saw in \eqref{2DYM} that the cone-flat condition \eqref{cflat1} becomes equivalent to  the Yang-Mills equation for $A$ with $B=-*\, F_A$. Concerning Yang-Mills connections in two dimensions, Atiyah-Bott \cite{Atiyah-Bott}*{Theorem 6.7} gave a classification of principal bundles over Riemann surfaces that carry a Yang-Mills connection.  So we ask whether Atiyah-Bott's classification of Yang-Mills bundles in two dimensions can be extended to cone-flat bundles in $\dim M =2n \geq 2$ with respect to a closed, non-degenerate two-form $\zeta$. 
Indeed, we obtain the following for principal bundles which generalizes  Atiyah-Bott's result.  

\begin{thm}\label{classification}
Let $M$ be a path connected manifold, $\zeta\in\Omega^2(M)$ be a non-zero, non-degenerate, closed two-form, and $G$ be a Lie group. There exists a bijective correspondence between the following sets:
\begin{align*}
\left\{\begin{matrix}\text{isomorphism classes of cone-flat connections}
\\ \text{with respect to $\zeta$ on  $G$-bundles over $M$}
\end{matrix}\right\}
\simeq
\left\{\begin{matrix}\text{conjugacy classes of}\\ \text{homomorphisms }
\rho: \Gamma\to G\end{matrix}\right\},
\end{align*}
where $\Gamma$ is an $\mathbb{R}/\overbar{H}$ extension of $\pi_1(M)$ with $\overbar{H}\subset\mathbb{R}$ being the closure of the group 
\begin{align}\label{Hdef}
H:=\left\{ \int_\mathcal{S}\zeta \ \big| \  \mathcal{S} \text{ is a representative in } \pi_2(M) \right\}.
\end{align}
\end{thm}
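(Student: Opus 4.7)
The strategy is to encode a cone-flat pair $(A,B)$ by a \emph{twisted holonomy} representation on a group $\Gamma$ that enlarges $\pi_1(M)$ by recording the $\zeta$-integrals of bounding homotopies. The heuristic is that the cone-flat condition is exactly the flatness $F_{\mathcal{A}}=0$ of $\mathcal{A}=A+\theta B$ when $d\theta=\zeta$; although $\zeta$ is not globally exact on $M$, one can track $\int\theta$ modulo its periods, and this $\mathbb{R}/\overbar{H}$-valued degree of freedom combines with loop homotopy classes to produce $\Gamma$. I will construct $\Gamma$, define a holonomy map sending cone-flat pairs to $\mathrm{Hom}(\Gamma,G)/G$, build an inverse by local-to-global gluing, and verify bijectivity.

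First I construct $\Gamma$ directly. Fix a basepoint $x_0\in M$, and let $\Gamma$ be the set of equivalence classes of pairs $(\gamma,t)$, with $\gamma$ a loop at $x_0$ and $t\in\mathbb{R}/\overbar{H}$, under the relation $(\gamma,t)\sim(\gamma',t')$ iff there exists a homotopy $H$ rel $x_0$ from $\gamma$ to $\gamma'$ with $t-t'\equiv\int_H\zeta\pmod{\overbar{H}}$. This is well-posed because any two homotopies with the same endpoints differ by a $2$-sphere class in $\pi_2(M)$, whose $\zeta$-period lies in $H\subset\overbar{H}$. Concatenation of loops yields a group law, and one verifies that $\Gamma$ sits in a central extension $1\to\mathbb{R}/\overbar{H}\to\Gamma\to\pi_1(M)\to 1$.

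For the forward map, given a cone-flat pair $(A,B)$ on $P\to M$, fix a frame at $x_0$ identifying $P_{x_0}\cong G$ and define
\begin{align*}
\rho(\gamma,t):=\mathrm{PT}_A(\gamma)\cdot\exp\!\bigl(t\,B(x_0)\bigr)\in G,
\end{align*}
where $\mathrm{PT}_A(\gamma)$ is the parallel transport of $A$ around $\gamma$. Three checks are required. First, $d_AB=0$ forces $B(x_0)$ to commute with every $\mathrm{PT}_A(\gamma)$, so $\rho$ is multiplicative. Second, non-abelian Stokes together with $F_A=-\zeta B$ yields $\mathrm{PT}_A(\gamma)\,\mathrm{PT}_A(\gamma')^{-1}=\exp\!\bigl(-(\int_H\zeta)\,B(x_0)\bigr)$ for homotopic $\gamma,\gamma'$, matching the equivalence defining $\Gamma$. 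Third, for any $h=\int_{\mathcal{S}}\zeta\in H$ with $\mathcal{S}\in\pi_2(M)$, the trivial $A$-holonomy around the null-homotopic $\partial\mathcal{S}$ equals $\exp(h\,B(x_0))$, so $\exp(h\,B(x_0))=1$ on the dense subgroup $H\subset\overbar{H}$, and continuity of $\exp$ extends this to all of $\overbar{H}$, making $\rho$ descend through $\mathbb{R}/\overbar{H}$. A change of frame conjugates $\rho$, so the class $[\rho]\in\mathrm{Hom}(\Gamma,G)/G$ is canonical.

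For the inverse, given $\rho:\Gamma\to G$, extract $\beta\in\mathfrak{g}$ from $\rho|_{\mathbb{R}/\overbar{H}}(t)=\exp(t\beta)$; centrality of the extension places $\beta$ in the centralizer of $\rho(\Gamma)$, and $\exp(h\beta)=1$ for $h\in\overbar{H}$ since $\rho$ factors through $\mathbb{R}/\overbar{H}$. Cover $M$ by simply connected opens $\{U_i\}$ with primitives $d\theta_i=\zeta|_{U_i}$, choose a set-theoretic section $s:\pi_1(M)\to\Gamma$, and form $\rho_s:=\rho\circ s$, which satisfies $\rho_s(\gamma_1)\rho_s(\gamma_2)=\rho_s(\gamma_1\gamma_2)\exp\!\bigl(c(\gamma_1,\gamma_2)\beta\bigr)$ for the extension cocycle $c$. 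Build $P\to M$ as the twisted associated bundle using $\rho_s$, set $A|_{U_i}=-\theta_i\,\beta$ (plus the flat $\rho_s$-contribution governing patch transitions) and $B\equiv\beta$ in each trivialization; the transitions are consistent because $\exp(c\,\beta)$ absorbs the cocycle defect of $\rho_s$, and a direct computation yields $F_A=-\zeta B$ and $d_AB=0$. The main technical obstacle is verifying that this inverse is independent of all auxiliary choices (cover, primitives, section, lifts) and that the two assignments are mutual inverses modulo gauge equivalence and conjugation; one must in particular justify carefully the continuity argument extending $\exp(h\beta)=1$ from $H$ to $\overbar{H}$, which is precisely what makes the $\mathbb{R}/\overbar{H}$-factor well-defined and ties the construction to Atiyah-Bott's two-dimensional classification as a limiting case.
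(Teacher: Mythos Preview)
Your strategy coincides with the paper's: both build $\Gamma$ as an extension of $\pi_1(M)$ by $\zeta$-areas modulo sphere periods, and both identify cone-flat connections with $\mathrm{Hom}(\Gamma,G)/G$ via a holonomy-type map. Your pair description $(\gamma,t)/\!\sim$ of $\Gamma$ is equivalent to the paper's path-space quotient $\Psi^a_a(M)/\Psi^a_{a,\zeta}(M)$, and your forward map $\rho(\gamma,t)=\mathrm{PT}_A(\gamma)\exp\bigl(tB(x_0)\bigr)$ together with the identity $\mathrm{PT}_A(\partial D)=\exp\bigl(-(\int_D\zeta)B(x_0)\bigr)$ is exactly the content of the paper's holonomy lemma. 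You justify that identity by non-abelian Stokes, which is legitimate here because $d_AB=0$ makes the surface-ordered integrand abelian; the paper instead invokes Ambrose--Singer to see that the holonomy algebra is one-dimensional, reduces to an abelian holonomy bundle, and applies ordinary Stokes there. Your uniform treatment of the dense-$H$ case via $\overbar{H}$ is slightly slicker than the paper's separate Case~3 argument, but equivalent.

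The genuine gap is your inverse construction. The sentence ``build $P\to M$ as the twisted associated bundle using $\rho_s$'' has no meaning as written: $\rho_s=\rho\circ s$ is only a set map $\pi_1(M)\to G$, not a homomorphism, so there is no associated-bundle functor to apply, and you have not specified transition functions or explained concretely how the local pieces $A|_{U_i}=-\theta_i\beta$ are glued to the ``flat $\rho_s$-contribution''. Saying that $\exp(c\,\beta)$ absorbs the cocycle defect is the right intuition but is not a construction. The paper avoids this entirely by observing that the path space $\Psi_a(M)$ (paths from the basepoint modulo the subsemigroup $\Psi^a_{a,\zeta}$) is itself a principal $\Gamma$-bundle over $M$; one then sets $P_\rho=\Psi_a(M)\times_\rho G$ and defines the connection by declaring the horizontal lift of $\alpha$ through $[\delta,g]$ to be $t\mapsto[\alpha_{(t)}\delta,g]$. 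This global construction makes cone-flatness, conjugation-invariance, injectivity, and surjectivity all short direct checks, whereas your \v{C}ech sketch would need substantial additional work to become a proof.
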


\

As in the theorem, we will assume in the remainder of this section that $M$ is path-connected.  We can give a more explicit description of $\Gamma$ in terms of path spaces.  To do so, we first introduce some notations.

Denote by $\Psi(M)$ the path space consisting of equivalence classes of closed, piecewise smooth paths on $M$.   Two paths  $\alpha_1, \alpha_2:[0,1]\to M$ are considered to be equivalent in $\Psi(M)$ if they have the same image and orientation.   
Specifically, $\alpha_1$ and $\alpha_2$ are equivalent if there exists a piecewise smooth increasing function $\phi:[0,1]\to [0,1]$ with $\phi(0)=0$ and $\phi(1)=1$ such that $\alpha_2=\alpha_1\circ\phi$.

For two paths $\alpha_1,\alpha_2:[0,1]\to M$ such that $\alpha_1(1)=\alpha_2(0)$, we define their multiplication by concatenation:  
\begin{align*}
(\alpha_2\alpha_1)(s)=
\begin{cases}
\alpha_1(2s), & s\in [0,\frac{1}{2}]\,,\\
\alpha_2(2s-1), & s \in [\frac{1}{2},1]\,.
\end{cases}
\end{align*}
A connected subpath of a path is also a path.  We define  
\begin{align*}
\alpha_{(t)}(s)= \alpha(s\,t)\,, \qquad t\in [0,1]\,.
\end{align*}
which is a one-parameter family of subpaths, parametrized by $t$, all with the same starting point $\alpha(0)$, and ending point being at $\alpha(t)$.

The following subsets of $\Psi(M)$ will be of interest:
\begin{itemize}
\item $\Psi^b_a(M)$ be the space of paths from a base point $a\in M$ to another point $b\in M$. 
\item $\Psi^a_a(M)$ be the semigroup in $\Psi(M)$ that consists of loops with base point $a$.   
\item $\Psi^a_{a,0}(M)$ be the subsemigroup of $\Psi^a_a(M)$ that consists of contractible loops.
\item $\Psi^a_{a,\zeta}(M)$ be the subsemigroup of $\Psi^a_{a,0}(M)$ that consists of loops which are the boundary of some disk (contractible 2-chain with proper orientation) $D$ in $M$ satisfying $\int_D\zeta=0$. 
\end{itemize}
Quotienting them leads to the following spaces:
\begin{enumerate}
\item $\Psi^a_a(M)/\Psi^a_{a,0}(M)=\pi_1(M)$ is the fundamental group where the identity element is the equivalence class of the constant path at $a$, and the inverse reverses the path orientation, i.e. $\alpha^{-1}(s)=\alpha(1-s)\,$.
\item $\Psi^a_{a,0}(M)/\Psi^a_{a,\zeta}(M)\cong\mathbb{R}/{H}$.  As $\zeta\neq 0$, for any real number $c$ there exists a contractible loop $\gamma$ that is the boundary of some disk $D$ such that $\int_D\zeta=c$. So $\Psi^a_{a,0}(M)/\Psi^a_{a,\zeta}(M)$ is equivalent to a quotient group $\mathbb{R}/{H}$ by the identification of a loop $\gamma\mapsto [\,\int_D \zeta\,] $, where 
$$H=\left\{ \int_\mathcal{S} \zeta~ \big|~ \mathcal{S} \text{ is a representative in } \pi_2(M) \right\}.$$
The group $H$ comes from noting that it is possible that the sum of two contractible curves  $\partial D_1 + \partial D_2$ can come from the vanishing boundary of a two-sphere formed by two hemisphere disks, $D_1$ and $D_2$, glued together at the equator such that $\partial D_1 = - \partial D_2$.    
\item $\Psi^a_a(M)/\Psi^a_{a,\zeta}(M)=:\!\!\Gamma\,$.  This is the extension of $\pi_1(M)$ by $\mathbb{R}/\overline{H}$ that appears in the statement of the theorem. In the case where $H$ is dense and thus $\mathbb{R}/H$ is not a Lie group,  $\Gamma=\pi_1(M)$.
\end{enumerate}

Explicitly, there are three possibilities for $H$. Let $H^+$ be the subset of $H$ with positive numbers.

\textbf{Case 1.}
When $H^+$ is empty, $H=0$ and $\mathbb{R}/H=\mathbb{R}$. In this case, $\Gamma=\Psi^a_a(M)/\Psi^a_{a,\zeta}(M)$ is an $\mathbb{R}$-extension of $\pi_1(M)$. (For instance, this occurs for any Riemann surface $\Sigma_g$ with genus $g\geq 1$, since then $\pi_2(\Sigma_g)=0\,$.)

\textbf{Case 2.}
When $H^+$ has a minimal number, $H\simeq\mathbb{Z}$ and $\mathbb{R}/H \simeq S^1$. In this case $\Gamma=\Psi^a_a(M)/\Psi^a_{a,\zeta}(M)$ is an $S^1$-extension of $\pi_1(M)$. (This occurs for Riemann surface of genus $g=0$ and $\zeta$ is not an exact form.)

\textbf{Case 3.}
When $H^+$ is non-empty and has no minimal number, $H$ is dense in $\mathbb{R}$ and $\mathbb{R}/H$ is not a Lie group. In this case, $\Gamma=\pi_1(M)$. This case is impossible when $c\,\zeta$ is an integral cohomology class for some number $c\neq0$.

We now proceed to prove the classification theorem Theorem \ref{classification}.   Our proof will be similar to that given by Morrison \cite{Morrison} for Yang-Mills bundles in the two-dimensional case.  

We will first consider the case that $\mathbb{R}/H$ is a Lie group. By the discussion above, $\Gamma=\Psi^a_a(M)/\Psi^a_{a,\zeta}(M)$ is either an $\mathbb{R}$ or an $S^1$ extension of $\pi_1(M)$. 

\bigskip

\noindent\textbf{Proof for Case 1 and 2: $\mathbb{R}/H=\mathbb{R}$ or $S^1$.}

\medskip

\noindent\textbf{Step 1.} Given a homomorphism $\rho:\Psi^a_a(M)/\Psi^a_{a,\zeta}(M)\to G$, construct a corresponding principal $G$-bundle $P_{\rho}$ with a connection $A_{\rho}\,$.

Let $\Psi_a(M)$ be the space of classes of paths starting at $a\in M$, where any two paths $\delta_1,\delta_2$ are identified if $\delta_1(1)=\delta_2(1)$ and $(\delta_2)^{-1}\delta_1\in\Psi^a_{a,\zeta}(M)$. Notice that $\Psi_a(M)$ is a principal bundle over $M$. The projection to the base space $M$, $\tau:\Psi_a(M)\to M$, is given by $\delta\mapsto\delta(1)$, and its structure group is $\Gamma=\Psi^a_a(M)/\Psi^a_{a,\zeta}(M)$, since any element $\gamma\in\Psi^a_a(M)/\Psi^a_{a,\zeta}(M)$ can act on $\delta\in \Psi_a(M)$ on the right by $\delta\mapsto\delta\gamma$.   

Given a homomorphism $\rho: \Gamma \to G$, we define the principal $G$-bundle $P_\rho$ as an associated $G$-bundle to $\Psi_a(M)$: 
\begin{align*}
P_\rho:=\Psi_a(M) \times_\rho G = \left(\Psi_a(M) \times G \right) \Big/(\delta\gamma,g)\sim \left(\delta,\rho(\gamma)g\right)
\end{align*}
where $\gamma \in \Gamma=\Psi^a_a(M)/\Psi^a_{a,\zeta}(M)$.  To denote the equivalence class, we use the bracket to denote a point $u=[\delta, g]\in P_\rho$.  Note that $G$ acts on $P_\rho$ by $[\delta,g]h=[\delta,gh]$ for $h\in G$. 
 
To define a connection on a principal bundle, recall that there are two ways to do so.  We can define a connection on $P_\rho$ either as a horizontal subspace, $H_u\subset (TP_\rho)_u$ at all $u\in P_\rho$, or as a one-form $A_\rho\in \Om^1(P_\rho, \mathfrak{g})$.  They are related by $H_u = \ker A_\rho\big|_u$. On $P_\rho$, we define a horizontal distribution by defining the horizontal lifts for any path $\alpha$ on $M$ as follows.
Given an arbitrary point $[\delta,g]\in (P_\rho)_{\alpha(0)}$ on the fiber of base point $\alpha(0)= \delta(1)\in M$, we define the horizontal lift of $\alpha$ starting from $[\delta,g]\in P_\rho$ by $\tilde{\alpha}(t)=[\alpha_{(t)}\delta,g]$ where $\alpha_{(t)}$ denotes the one-parameter family of paths within $\alpha$ starting at $\alpha(0)$ and ending at $\alpha(t)$.  It is straightforward to check that this construction is well-defined and satisfies the $G$ invariance condition for a connection on $P_\rho$.  We will not need to explicitly write down $A_\rho$ as a one-form in order to check that $A_\rho$ satisfies the cone-flat condition.  We will instead express the curvature in terms of the holonomy group.

\medskip

\noindent\textbf{Step 2.} Verify that the connection $A_{\rho}$ is cone-flat, that is, $F_{A_\rho} = -\zeta B_\rho$ such that $B_\rho$ is covariantly constant.

\begin{lem}
The connection $A_\rho$ constructed above is cone-flat.
\end{lem}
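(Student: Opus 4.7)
The plan is to identify $B_\rho$ as the covariantly constant section coming from a single fixed Lie algebra element, and then read off $F_{A_\rho}=-\zeta\,B_\rho$ from the infinitesimal holonomy of $A_\rho$.

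First, I would show that the subgroup $\mathbb{R}/H\subset\Gamma$ is central. Given a loop $\gamma\in\Psi^a_a(M)$ and a contractible loop $\gamma_c$ bounding a disk $D$ with $\int_D\zeta=c$, the conjugate $\gamma\gamma_c\gamma^{-1}$ is contractible via the disk obtained by sweeping $D$ along $\gamma$ and back along $\gamma^{-1}$; the $\zeta$-integral over this new disk agrees with $c$ modulo $H$, since any two fillings of the same null-homotopic loop differ by a class in $\pi_2(M)$. Consequently $\mathrm{Ad}(\rho(\Gamma))$ fixes $\beta := \rho_*(1)\in\mathfrak{g}$, where $\rho_*$ is the derivative of $\rho$ restricted to $\mathrm{Lie}(\mathbb{R}/H)\simeq\mathbb{R}$. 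Because $\beta$ is $\mathrm{Ad}(\rho(\Gamma))$-invariant, it defines a global section $B_\rho\in\Omega^0(M,Ad\,P_\rho)$: in each local trivialization induced by a path $\delta\in\Psi_a(M)$, $B_\rho$ is the constant $\mathfrak{g}$-element $\beta$, and the transition data through $\mathrm{Ad}(\rho(\Gamma))$ preserves this. Since parallel transport of $A_\rho$ acts on the adjoint bundle precisely via $\mathrm{Ad}(\rho(\Gamma))$, the section $B_\rho$ is automatically covariantly constant, $d_{A_\rho}B_\rho=0$.

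Next, I would compute $F_{A_\rho}$ via infinitesimal holonomy. Fix $p\in M$, tangent vectors $X,Y\in T_pM$, and a path $\delta$ from $a$ to $p$. Choose a family of small contractible loops $\gamma_\epsilon$ at $p$ bounding disks $D_\epsilon$ spanned by $\epsilon X$ and $\epsilon Y$, so that $\int_{D_\epsilon}\zeta=\epsilon^2\zeta(X,Y)+O(\epsilon^3)$. The horizontal-lift rule $\tilde\alpha(t)=[\alpha_{(t)}\delta,g]$ shows that the holonomy along $\gamma_\epsilon$ acts on the fiber by
\begin{align*}
[\delta,g]\ \mapsto\ [\gamma_\epsilon\delta,\,g]\ =\ [\delta,\,\rho(\delta^{-1}\gamma_\epsilon\delta)\,g].
\end{align*}
Since $\delta^{-1}\gamma_\epsilon\delta$ is a contractible loop at $a$ whose class in $\Gamma$ lies in the central subgroup $\mathbb{R}/H$ and equals $\int_{D_\epsilon}\zeta\pmod H$, one has
\begin{align*}
\rho(\delta^{-1}\gamma_\epsilon\delta)\ =\ \exp\!\bigl(\epsilon^{2}\zeta(X,Y)\,\beta+O(\epsilon^{3})\bigr).
\end{align*}
Matching against the standard holonomy-curvature expansion $\exp\!\bigl(-\epsilon^{2}F_{A_\rho}(X,Y)+O(\epsilon^{3})\bigr)$ yields $F_{A_\rho}(X,Y)=-\zeta(X,Y)\,\beta$, i.e.\ $F_{A_\rho}+\zeta\,B_\rho=0$. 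Combined with the previous paragraph, this gives cone-flatness.

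The main obstacle will be the first step: checking cleanly that $\mathbb{R}/H$ is central in $\Gamma$ and that the $\Gamma$-class of the conjugated small loop $\delta^{-1}\gamma_\epsilon\delta$ really equals $\int_{D_\epsilon}\zeta\pmod H$. This requires carefully tracking how filling disks change under conjugation by $\delta$ and using that the ambiguity among such fillings is detected exactly by the image of $\pi_2(M)$ under integration of $\zeta$, namely $H$ as defined in \eqref{Hdef}. Once these basepoint/filling issues are under control, the holonomy computation is essentially a Stokes-type identity, and the cone-flat identity drops out.
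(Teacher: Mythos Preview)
Your proposal is correct and follows essentially the same approach as the paper: compute $F_{A_\rho}$ via infinitesimal holonomy around small loops, identify that holonomy with $\rho(\delta^{-1}\gamma_\epsilon\delta)$, and use that the $\Gamma$-class of this contractible based loop is determined by $\int_{D_\epsilon}\zeta\in\mathbb{R}/H$. The only organizational difference is that the paper reads $B_\rho$ directly off the (a priori well-defined) curvature rather than first proving centrality of $\mathbb{R}/H$ to construct $B_\rho$, and it sets up the small loops in Darboux coordinates; the core computation is the same.
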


\begin{proof}
Let $p\in M$ be an arbitrary point and $v_1,v_2\in T_pM$ be arbitrary linearly independent vectors. Suppose $\zeta(v_1,v_2)=c\neq 0$. By Darboux's theorem, we can find a local coordinate system $\{x_1,\ldots,x_m\}$ such that $v_1=\frac{\partial}{\partial x_1}$, $v_2=\frac{\partial}{\partial x_2}$, and $\zeta=c\,dx_1\wedge dx_2+\bar{\zeta}$, where $\bar{\zeta} = dx_3 \w dx_4 + \ldots ,$ if $\dim M\geq 4$. Let $D_t$ be an infinitesimal parallelogram spanned by $\sqrt{t}v_1$ and $\sqrt{t}v_2$ in the local coordinate system, and $\gamma_t=\partial D_t$ be its boundary. Then we have $\int_{D_t} \zeta=c\,t$.

At arbitrary $[\delta,g]\in P_\rho$ on the fiber of $p$, let $v_1^H$ and $v_2^H$ be the horizontal lift of $v_1$ and $v_2$, respectively. Then the curvature
$$
F_{A_\rho}\left(v_1^H,v_2^H\right)=\frac{\partial}{\partial t}hol\left(\gamma_t\right)\Big|_{t=0}\;.
$$
Here, $hol\left(\gamma_t\right)\in G$ denotes the holonomy along $\gamma_t$ at $[\delta,g]$.

On the other hand,
$$
[\delta,g]hol\left(\gamma_t\right)=[\gamma_t\delta,g]=[\delta(\delta^{-1}\gamma_t\delta),g]=\left[\delta,g\left(g^{-1}\rho\left(\delta^{-1}\gamma_t\delta\right)g\right)\right].
$$
This implies
$$
hol\left(\gamma_t\right)=g^{-1}\rho\left(\delta^{-1}\gamma_t\delta\right)g\,.
$$
Note that $\delta^{-1}\gamma_t\delta$ is contractible and its base point is $a\in M$, i.e. $\delta^{-1}\gamma_t\delta \in \Psi^a_{a,0}(M)$. We can express $\delta^{-1}\gamma_t\delta = \exp(c\,t\, \xi)$, where $\xi$ is an element of the Lie algebra of $\Psi^a_a(M)/\Psi^a_{a,\zeta}(M)$ that 
generates the subgroup $\Psi^a_{a,0}(M)/\Psi^a_{a,\zeta}(M)\cong\mathbb{R}/H$ and  $\int_{D_t}\zeta  = \zeta(v_1, v_2)\, t= c\,t \in \mathbb{R}/H$.
Note that $\xi$ is independent of the choice of $v_1$ and $v_2$. We thus find at $p\in M$
$$
F_{A_\rho}\left(v_1^H,v_2^H\right)=\frac{\partial}{\partial t}hol\left(\gamma_t\right)\Big|_{t=0}=c\,\mathrm{Ad}_{g^{-1}}d\rho(\xi)=\zeta(v_1,v_2)\mathrm{Ad}_{g^{-1}}d\rho(\xi)
=-\zeta(v_1, v_2)B_\rho\,,$$
where $d\rho$ maps the Lie algebra of $\Gamma$ into $\mathfrak{g}=\Lie(G)$.  It is clear that $B_\rho$ as obtained above is covariantly constant as it is a constant when evaluated along horizontally lifted curves.

Thus far, we have assumed $\zeta(v_1,v_2)\neq 0$.  If however $\zeta(v_1,v_2)=0$ which may occur in $\dim M \geq 4$, then we can find a local coordinate system $\{x_1,\ldots,x_m\}$ such that $v_1=\frac{\partial}{\partial x_1}$, $v_2=\frac{\partial}{\partial x_3}$, and $\zeta=dx_1\wedge dx_2+ dx_3 \w dx_4 + \bar{\zeta}$, where $\bar{\zeta}$, if non-zero, is generated by other $dx_i\wedge dx_j$ locally. The proof goes through as above but with $c$ set to zero.  
\end{proof}

\medskip

\noindent\textbf{Step 3.} Show that the morphism $\rho\mapsto(P_{\rho},A_{\rho})$ is invariant under conjugation.

\begin{lem}
Suppose $\rho$ and $\bar{\rho}$ are conjugate homomorphisms from $\Gamma=\Psi^a_a(M)/\Psi^a_{a,\zeta}(M)$ to $G$, and also, $(P_\rho,A_\rho)$ and $(P_{\bar{\rho}},A_{\bar{\rho}})$ are principal bundles with cone-flat connections constructed as above. Then $(P_\rho,A_\rho)$ and $(P_{\bar{\rho}},A_{\bar{\rho}})$ are equivalent.
\end{lem}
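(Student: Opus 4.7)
The plan is to produce an explicit, canonical bundle isomorphism $\Phi : P_\rho \to P_{\bar{\rho}}$ covering the identity on $M$, and then show directly that it carries the horizontal distribution of $A_\rho$ to that of $A_{\bar{\rho}}$.

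First, I fix $h\in G$ with $\bar{\rho}(\gamma) = h\,\rho(\gamma)\,h^{-1}$ for every $\gamma\in\Gamma$. The natural candidate is
\[
\Phi\bigl([\delta,g]\bigr) := [\delta,\, h g] \ \in P_{\bar{\rho}}.
\]
The first step is to check this is well-defined on equivalence classes: if $[\delta\gamma,g]=[\delta,\rho(\gamma)g]$ in $P_\rho$, then in $P_{\bar{\rho}}$ we have $[\delta\gamma, hg] = [\delta,\bar{\rho}(\gamma)\, hg] = [\delta,\, h\rho(\gamma)h^{-1}\, hg] = [\delta, h\rho(\gamma)g]$, which matches. $G$-equivariance is immediate since right multiplication by $k\in G$ acts on the second slot and commutes with left multiplication by $h$, and $\Phi$ clearly covers $\mathrm{id}_M$ because the projection to $M$ only uses the path $\delta$. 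Running the same argument in the reverse direction with $h^{-1}$ produces a two-sided inverse, so $\Phi$ is an isomorphism of principal $G$-bundles.

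Second, I verify that $\Phi^* A_{\bar{\rho}} = A_\rho$ by checking that horizontal lifts are carried to horizontal lifts, which suffices since both connections are characterized in Step 1 by their horizontal distributions. Recall the explicit formula: a path $\alpha$ on $M$ starting at $\alpha(0)=\delta(1)$ lifts horizontally from $[\delta,g]\in P_\rho$ as $\tilde{\alpha}(t) = [\alpha_{(t)}\delta,\,g]$. Applying $\Phi$ yields $[\alpha_{(t)}\delta,\, hg]$, which is exactly the horizontal lift in $P_{\bar{\rho}}$ starting at $\Phi([\delta,g])=[\delta,hg]$, by the same formula applied in $P_{\bar{\rho}}$. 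Therefore $\Phi$ intertwines the horizontal distributions, so $(P_\rho,A_\rho)$ and $(P_{\bar{\rho}},A_{\bar{\rho}})$ are isomorphic as bundles with connection.

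There is essentially no analytic obstacle here: because both bundles are built by the same associated-bundle construction from the \emph{same} principal $\Gamma$-bundle $\Psi_a(M)\to M$, and the connection on each is induced from the canonical horizontal distribution on $\Psi_a(M)$ (which depends only on $\Psi_a(M)$, not on $\rho$), conjugating $\rho$ simply amounts to left-translating the $G$-factor, and this is automatically horizontal-preserving. The only point that requires a moment of care is the well-definedness check on equivalence classes, where one must be careful to conjugate in the correct order; the rest is formal. The same argument with minor notational adjustments covers Case 3, where $\Gamma=\pi_1(M)$ and $\Psi^a_{a,\zeta}(M)=\Psi^a_a(M)$ is to be replaced accordingly, since the construction of $P_\rho$ and its horizontal distribution is structurally identical.
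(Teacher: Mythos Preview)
Your proof is correct and essentially identical to the paper's: both define the bundle map $[\delta,g]\mapsto[\delta,hg]$ (the paper writes $g_0$ for your $h$), verify well-definedness via the conjugacy relation, and check that horizontal lifts $[\alpha_{(t)}\delta,g]$ are carried to horizontal lifts $[\alpha_{(t)}\delta,hg]$. Your closing aside about Case~3 is unnecessary here (the paper treats Case~3 by a separate argument, not via this lemma) and contains a slip---writing $\Psi^a_{a,\zeta}(M)=\Psi^a_a(M)$ would make $\Gamma$ trivial rather than $\pi_1(M)$---but this does not affect the proof of the lemma itself.
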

\begin{proof}
Suppose $\rho$ and $\bar{\rho}$ are conjugate, i.e. $\bar{\rho}=g_0\rho g_0^{-1}$ for some $g_0\in G$.  We consider the automorphism on $\Psi_a(M) \times G$ given by $(\delta,g)\mapsto(\delta,g_0g)$.  This automorphism induces the desired bundle isomorphism 
\begin{align*}
f:\quad \Psi_a(M) \times_\rho G\quad \longrightarrow & \quad\quad \Psi_a(M) \times_{\bar\rho} G\\ 
 (\delta \gamma, g) \sim (\delta, \rho(\gamma)g) \mapsto  &\ \   (\delta\gamma,g_0g)\sim (\delta, \bar{\rho}(\gamma) g_0 g) = (\delta, g_0\rho(\gamma)g)
\end{align*}
where the second line shows the map on the equivalence class for all $\gamma \in \Gamma=\Psi^a_a(M)/\Psi^a_{a,\zeta}(M)$.  

Now to show $f^*A_{\bar\rho}=A_\rho$, let $[\delta,g]_\rho\in \Psi_a(M)\times_\rho G$ and $\alpha$ on $M$ be an arbitrary path starting at $\alpha(0)=\delta(1)$.  As described in Step 1 in defining $A_\rho$, the horizontal lift of $\alpha$ starting at $[\delta,g]_\rho$ is defined as $\tilde{\alpha}_\rho(t)=[\alpha_{(t)}\delta,g]_\rho$.  Likewise, the horizontal lift of $\alpha$ starting at $f\big( [\delta,g]_\rho \big)=[\delta,g_0g]_{\bar{\rho}}$ is defined as $\tilde{\alpha}_{\bar{\rho}}(t)=[\alpha_{(t)}\delta,g_0g]_{\bar{\rho}}$.  Clearly, we have $\tilde{\alpha}_{\bar{\rho}}(t)=f\circ \tilde{\alpha}_\rho(t)$, which implies $f_*$ sends horizontal vectors to horizontal vectors as desired.
\end{proof}

By this lemma, given any conjugacy classes $[\rho]$ of the homomorphisms from $\Psi^a_a(M)/\Psi^a_{a,\zeta}(M)$ to $G$, there is a corresponding $G$-bundle $P_{\rho}$ with a cone-flat connection $A_{\rho}$. It remains to show that this correspondence is bijective.

\medskip

\noindent\textbf{Step 4.} The morphism $[\rho]\mapsto(P_{\rho},A_{\rho})$ is injective.

\begin{lem}
Let $\rho,\bar{\rho}:\Psi^a_a(M)/\Psi^a_{a,\zeta}(M)\to G$. If $(P_\rho,A_\rho)$ and $(P_{\bar{\rho}},A_{\bar{\rho}})$ are equivalent, then $\rho$ and $\bar{\rho}$ are conjugate.
\end{lem}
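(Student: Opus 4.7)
The plan is to produce the conjugating element $g_0 \in G$ by evaluating the bundle isomorphism at a distinguished basepoint over $a \in M$, and then to read off the relation $\bar\rho = g_0 \rho g_0^{-1}$ by comparing holonomies around loops in the two bundles. Fix $u_0 := [\epsilon_a, e]_\rho \in (P_\rho)_a$, where $\epsilon_a$ is the constant path at $a$ and $e \in G$ the identity. Any equivalence $f : (P_\rho, A_\rho) \to (P_{\bar\rho}, A_{\bar\rho})$ is a $G$-equivariant bundle isomorphism covering the identity on $M$, so $f(u_0) \in (P_{\bar\rho})_a$ and there is a unique $g_0 \in G$ with $f(u_0) = [\epsilon_a, g_0]_{\bar\rho}$. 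This $g_0$ is the candidate conjugator.

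Next, for an arbitrary loop $\gamma \in \Psi^a_a(M)$ with class $[\gamma] \in \Gamma$, I would use the explicit horizontal lift $\tilde\gamma(t) = [\gamma_{(t)}\delta, g]$ from Step~1 to compute parallel transport around $\gamma$ in both bundles. Starting at $u_0$ in $P_\rho$, the lift ends at $[\gamma, e]_\rho = [\epsilon_a, \rho([\gamma])]_\rho$, so the holonomy at $u_0$ is $\rho([\gamma])$. Starting at $f(u_0) = [\epsilon_a, g_0]_{\bar\rho}$ in $P_{\bar\rho}$, the same calculation gives endpoint $[\epsilon_a, \bar\rho([\gamma])\,g_0]_{\bar\rho}$. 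Because $f$ is connection-preserving, it sends horizontal lifts to horizontal lifts over the same base curve, hence
\begin{equation*}
f\bigl([\epsilon_a, \rho([\gamma])]_\rho\bigr) = [\epsilon_a, \bar\rho([\gamma])\,g_0]_{\bar\rho}.
\end{equation*}
On the other hand, $G$-equivariance of $f$ gives
\begin{equation*}
f\bigl([\epsilon_a, \rho([\gamma])]_\rho\bigr) = f(u_0)\cdot \rho([\gamma]) = [\epsilon_a, g_0\,\rho([\gamma])]_{\bar\rho}.
\end{equation*}
Equating these two expressions in the fiber over $a$ yields $\bar\rho([\gamma])\,g_0 = g_0\,\rho([\gamma])$, i.e.\ $\bar\rho = g_0 \rho g_0^{-1}$ on all of $\Gamma$, which is the conjugacy claim.

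The only subtlety I foresee is confirming that the endpoint of the horizontal lift genuinely descends to a well-defined function of the class $[\gamma] \in \Gamma$, rather than of the loop $\gamma \in \Psi^a_a(M)$. For $P_\rho$ this is automatic from the construction, since $\Psi_a(M)$ is already quotiented by the right action of $\Psi^a_{a,\zeta}(M)$; at the level of holonomy, well-definedness is guaranteed by the cone-flat identity $F_{A_\rho} = -\zeta\,B_\rho$ established in Step~2, which forces a contractible loop $\partial D$ to contribute holonomy only through $\int_D \zeta \in \mathbb{R}/\overbar{H}$. Once this bookkeeping is in place, the comparison above is purely formal and I do not anticipate any deeper obstacle.
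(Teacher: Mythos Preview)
Your argument is correct and rests on the same core fact as the paper's proof—that a connection-preserving $G$-equivariant isomorphism $f$ carries horizontal lifts to horizontal lifts—but you organize it a bit differently. The paper defines a function $h:\Psi_a(M)\to G$ by $f([\delta,e]_\rho)=[\delta,e]_{\bar\rho}\,h(\delta)$ for \emph{all} paths $\delta$, derives the cocycle relation $\rho(\gamma)=h(\delta)^{-1}\bar\rho(\gamma)h(\delta\gamma)$, and then shows $h$ is constant by comparing two horizontal lifts of an arbitrary path $\delta$; your approach instead fixes the single basepoint $u_0=[\epsilon_a,e]_\rho$, extracts $g_0$ there, and compares holonomies of loops directly. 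Your route is the more standard ``holonomy representation'' argument and is slightly more economical; the paper's route has the minor advantage of making explicit that the gauge transformation relating the two connections is globally constant (not just at the fiber over $a$), though of course that follows from your conclusion as well. The well-definedness concern you raise at the end is not actually an issue: you only need the identity $\bar\rho([\gamma])\,g_0=g_0\,\rho([\gamma])$ for each class $[\gamma]\in\Gamma$, and both sides are already functions of the class since $\rho,\bar\rho$ are defined on $\Gamma$.
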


\begin{proof}
Suppose $f:P_\rho \to P_{\bar\rho}$ is a $G$-bundle isomorphism and $f^*A_{\bar\rho}=A_\rho$. Let us define $h:\Psi_a(M)\to G$ such that
$$
f\big( [\delta,e]_\rho \big)=[\delta,e]_{\bar\rho}h(\delta)
$$
where $\delta\in \Psi_a(M)$ and $e$ is the identity of $G$.  Then, for each $\gamma\in \Psi^a_a(M)/\Psi^a_{a,\zeta}(M)$, we have
\begin{align*}
 [\delta,e]_{\bar\rho}h(\delta)\rho(\gamma)&=f\big( [\delta,e]_\rho \big)\rho(\gamma)=f\big( [\delta,e]_\rho\rho(\gamma) \big)\\
&= f\big( [\delta\gamma,e]_\rho \big)=[\delta\gamma,e]_{\bar\rho} h(\delta\gamma)=[\delta,e]_{\bar\rho}\bar{\rho}(\gamma)h(\delta\gamma)\,.
\end{align*}
This implies $\rho(\gamma)=h(\delta)^{-1}\bar{\rho}(\gamma)h(\delta\gamma)$. We will prove that $h$ is a constant.

For arbitrary $\delta\in \Psi_a(M)$, let $\delta_{(t)}$ denote the $t$-parametrized subpaths of $\delta$ starting at $\delta(0)=a$ and ending at $\delta(t)$.  By the construction of the connections, $[\delta_{(t)},e]_\rho$ and $[\delta_{(t)},e]_{\bar{\rho}}$ are horizontal paths on $P_\rho$ and $P_{\bar{\rho}}\,$, respectively. The projection of these two paths on $M$ are exactly $\delta_{(t)}$. By the definition of $h$, we have
\begin{align}\label{fmap1}
f\big( [\delta_{(t)},e]_\rho \big)=[\delta_{(t)},e]_{\bar\rho}h(\delta_{(t)})\,.
\end{align}
On the other hand, since $f^*A_{\bar\rho}=A_\rho$, $f\big( [\delta_{(t)},e]_\rho \big)$ is a horizontal lift of $\delta_{(t)}$ on $P_{\bar{\rho}}$ passing through $f\big( [\delta_{(0)},e]_\rho \big)$. As horizontal subspaces are invariant under right $G$-action, $[\delta_{(t)},e]_{\bar\rho}h(\delta_{(0)})$ is also a horizontal lift of $\delta_{(t)}$ on $P_{\bar{\rho}}$ passing through $[\delta_{(0)},e]_{\bar\rho}h(\delta_{(0)})=f\big( [\delta_{(0)},e]_\rho \big)$. Therefore, we have
\begin{align}\label{fmap2}
f\big( [\delta_{(t)},e]_\rho \big)=[\delta_{(t)},e]_{\bar\rho}h(\delta_{(0)})\,.
\end{align}
Comparing \eqref{fmap1}-\eqref{fmap2}, we have $h(\delta_{(t)})=h(\delta_{(0)})$ for all $t\in [0,1]$. This implies $h(\delta_{(t)})$ is equal to $h$ acting on the constant path at $a$. So $h$ is a constant, and therefore, $\rho$ and $\bar{\rho}$ are conjugate.
\end{proof}

\medskip

\noindent\textbf{Step 5.} The morphism $[\rho]\mapsto(P_{\rho},A_{\rho})$ is surjective.

The following lemma leads to surjectivity. It also holds when $\mathbb{R}/H$ is not a Lie group, which we will discuss later.
\begin{lem}\label{holonomy}
Let $\pi:P\to M$ be a principal $G$-bundle, and let $A$ be a cone-flat connection on $P$ with curvature $F_A=-\zeta B$. Then there exists $\xi\in\mathfrak{g}$ such that for any contractible loop $\gamma$ starting at $a\in M$ and any oriented disk $D$ in $M$ with $\partial D=\gamma$ ($\partial D$ and $\gamma$ also have the same orientation), the holonomy along $\gamma$ is
$$
hol(\gamma)=\mathrm{exp}\Big[\Big(\int_D \zeta\, \Big)\, \xi \Big]\,.
$$
\end{lem}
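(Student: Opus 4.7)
The plan is to reduce the non-abelian holonomy computation to an essentially abelian one by exploiting the covariant constancy of $B$. Viewing $B$ as a $G$-equivariant function $P\to\mathfrak{g}$, I would fix a basepoint $u_0\in P|_a$ and set $\xi:=B(u_0)\in\mathfrak{g}$. The equation $d_AB=0$ says that $B$ is constant along horizontal lifts, so $B(u)=\xi$ at any point $u$ obtained by parallel transport of $u_0$.

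Next, I would choose a homotopy $\sigma:[0,1]^2\to M$, $(t,s)\mapsto\gamma_t(s)$, through loops based at $a$ with $\gamma_0$ the constant loop, $\gamma_1=\gamma$, and $\sigma$ parametrizing $D$ with orientation making $\int_{[0,1]^2}\sigma^*\zeta=\int_D\zeta$. Let $g(t):=hol(\gamma_t)\in G$ with respect to the trivialization at $a$ determined by $u_0$; then $g(0)=e$, and the goal is to show $g(1)=\exp\bigl((\int_D\zeta)\,\xi\bigr)$.

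The key step is the standard variation-of-holonomy formula under a homotopy,
$$g(t)^{-1}g'(t)\;=\;-\int_0^1 \mathrm{Ad}_{hol(\gamma_t|_{[0,s]})^{-1}}\!\Bigl(F_A\bigl(\partial_s\gamma_t,\partial_t\gamma_t\bigr)\Bigr)\,ds.$$
Substituting $F_A=-\zeta\,B$ and using that parallel transport preserves $B$, so that $\mathrm{Ad}_{hol(\gamma_t|_{[0,s]})^{-1}}\!\bigl(B(\gamma_t(s))\bigr)=\xi$, the integrand collapses to $\zeta(\partial_s\gamma_t,\partial_t\gamma_t)\,\xi$, giving
$$g(t)^{-1}g'(t)\;=\;c(t)\,\xi,\qquad c(t):=\int_0^1\zeta\bigl(\partial_s\gamma_t,\partial_t\gamma_t\bigr)\,ds.$$
Since $\xi$ is a fixed element, this ODE takes values in the abelian line $\mathbb{R}\xi$ and integrates immediately to $g(t)=\exp\bigl((\int_0^t c(\tau)d\tau)\,\xi\bigr)$; at $t=1$, Fubini gives $\int_0^1 c(t)\,dt=\int_{[0,1]^2}\sigma^*\zeta=\int_D\zeta$, yielding the claim.

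The main thing to verify with care is the variation-of-holonomy formula itself, together with the crucial identification of $\mathrm{Ad}_{hol^{-1}}$ with ``transporting $B$ back to the basepoint,'' which is just the parallel-transport interpretation of $d_AB=0$. As a built-in consistency check, independence of $hol(\gamma)$ from the choice of $D$ forces $\exp(h\,\xi)=e$ for every $h\in H$ and, by continuity, for every $h\in\overline{H}$, since two disks bounding $\gamma$ differ by a sphere whose $\zeta$-period lies in $H$; thus $t\mapsto\exp(t\,\xi)$ factors through $\mathbb{R}/\overline{H}$, precisely matching the extension structure appearing in Theorem~\ref{classification}.
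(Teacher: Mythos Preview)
Your argument is correct and complete in outline; the variation-of-holonomy identity you invoke is standard, and the crucial step---that $\mathrm{Ad}_{hol(\gamma_t|_{[0,s]})^{-1}}$ applied to $B$ along the lift returns the fixed value $\xi$ because $d_AB=0$---is precisely the content of covariant constancy in the adjoint bundle. Once the integrand collapses to $c(t)\,\xi$, the ODE $g^{-1}g'=c(t)\xi$ indeed has the unique solution $g(t)=\exp\bigl(\int_0^t c\bigr)\xi$, since the ansatz lies in the one-parameter subgroup generated by $\xi$ and solutions to first-order ODEs on $G$ are unique.

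Your route differs from the paper's. The paper does not use the variation-of-holonomy formula; instead it passes to the holonomy bundle $\hat P\subset P$ and applies the Ambrose--Singer theorem to conclude that the holonomy algebra $\hat{\mathfrak g}$ is the line $\mathbb{R}\xi$, hence abelian. On $\hat P$ the connection then takes the form $\hat A=-\theta\otimes\xi$ with $d\theta=\hat\pi^*\zeta$, and the holonomy is read off from this abelian connection on a local trivialization over a contractible neighbourhood of $D$, with Stokes providing $\int_\gamma\sigma^*\theta=\int_D\zeta$. Your approach is more direct and computational, bypassing Ambrose--Singer entirely; the paper's approach is more structural, making explicit that the restricted holonomy group is at most one-dimensional and furnishing a global primitive $\theta$ on $\hat P$. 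Both reduce the problem to an abelian computation, but they do so at different stages: you reduce inside the variation integral, the paper reduces at the level of the bundle. Your closing remark about $\exp(h\xi)=e$ for $h\in\overline{H}$ is a nice anticipatory observation but is not needed for the lemma itself.
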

\begin{proof}
Take a point $u_0\in P$ on the fiber of $a\in M$. Consider the holonomy bundle
$$
\hat{P}=\{ u\in P\, |\, \text{there exists a horizontal curve } \tilde\delta \text{ such that } \tilde\delta(0)=u_0, \tilde\delta(1)=u \}.
$$
$\hat{P}$ is a principal bundle over $M$, and its structure group $\hat{G}$ is the holonomy group of $P$ at $u_0$. Let $\hat{A}$ be the restriction of $A$ on $\hat{P}$. By the holonomy theorem of Ambrose-Singer \cite{AS}, the Lie algebra of $\hat{G}$ is
$$
\hat{\mathfrak{g}}=\mathrm{span}\left\{ F_{\hat{A}} \!\left(v_1^H,v_2^H\right)\big| \;v_1^H,v_2^H \text{ are horizontal vectors at }u\text{ for some }u\in\hat{P} \right\}.
$$
By assumption, $F_{A} =-\zeta B$, or more precisely $F_{A} =-(\pi^*\zeta)B$. Since $B$ is covariantly constant, it is equal to some $\xi\in\hat{\mathfrak{g}}$ at any point in $\hat{P}$. Hence, $F_{\hat{A}} \left(v_1^H,v_2^H\right)\in \mathbb{R}\xi$. So $\hat{\mathfrak{g}}$ is 1-dimensional and abelian. Then $\hat{A}=-\theta \otimes\xi$ for some $\theta\in \Omega^1(\hat{P})$ and $d\theta=\hat{\pi}^*\zeta$, where  $\hat{\pi}:\hat{P}\to M$ is the projection.

For an arbitrary contractible loop $\gamma$ and a disk $D$ such that $\partial D=\gamma$, there exists a contractible neighborhood $U\subset M$ of $D$. Then $\hat{P}|_U=U\times \hat{G}$ is trivial. Let $\sigma:U\to \hat{P}|_U$ be a local section and $\psi:\hat{P}|_U\to U\times \hat{G}$ be a trivialization such that $\psi\circ\sigma(p)=(p,e)$ for $p\in M$. Then $(\psi^{-1})^*\hat{A}=(-\sigma^*\theta\otimes\xi,0)+(0,MC_{\hat{G}})$, where $MC_{\hat{G}}:T\hat{G}\to\hat{\mathfrak{g}}$ is the Maurer-Cartan form of $\hat{G}$ sending a vector to the corresponding invariant vector field. Observe that $d(\sigma^*\theta)=\zeta$. The horizontal lift $\tilde{\gamma}$ of $\gamma$ with $\tilde{\gamma}(0)=\sigma(a)$ satisfies $\psi\circ\tilde{\gamma}(t)=(\gamma(t),g(t))$ with $g(t)\in \hat{G}$ and $g(0)=e$. With the horizontal vectors in the kernel space of the connection one-form, we have
\begin{align*}
0 = \hat{A}\big(\tilde{\gamma}'(t)\big) \Big|_{\gamma(t_0)} 
&= (\psi^{-1*}\hat{A})\big(\gamma'(t),0\big) \Big|_{\big(\gamma(t_0),g(t_0)\big)}+(\psi^{-1*}\hat{A})\big(0,g'(t)\big) \Big|_{\big(\gamma(t_0),g(t_0)\big)} \\
&= -(\sigma^*\theta)\big(\gamma'(t)\big)\cdot \xi \Big|_{\gamma(t_0)}+MC_{\hat{G}}\big(g'(t)\big) \Big|_{g(t_0)}.
\end{align*}
Hence, we find
$$
g(t_0)=\mathrm{exp}\Big[ \Big(\int_0^{t_0}(\sigma^*\theta)\big(\gamma'(t)\big)dt\,\Big)\,\xi \Big],
$$
and the holonomy along $\gamma$ is
$$
hol(\gamma)=g(1)=\mathrm{exp}\Big[ \Big(\int_{\gamma}\sigma^*\theta \,\Big)\, \xi \Big]=\mathrm{exp}\Big[\Big(\int_D \zeta\, \Big)\, \xi \Big].
$$
\end{proof}

Now we proceed to prove surjectivity.

\begin{lem}
Let $\pi:P\to M$ be a principal $G$-bundle with a cone-flat connection $A$ with respect to $\zeta$.  Then there exists a homomorphism $\rho:\Psi^a_a(M)/\Psi^a_{a,\zeta}(M)\to G$ such that $(P_\rho,A_\rho)$ and $(P,A)$ are equivalent.
\end{lem}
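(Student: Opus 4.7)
My plan is to define $\rho$ via the holonomy of $A$ at a fixed base point, and then to exhibit an explicit bundle isomorphism $f\colon P_\rho\to P$ through horizontal parallel transport. Fix a point $u_0\in P$ above $a\in M$. For each $\gamma\in\Psi^a_a(M)$, declare $\rho(\gamma):=\mathrm{hol}(\gamma)\in G$, where the holonomy is computed with respect to $A$ at $u_0$. Since concatenation of loops corresponds to composition of parallel transports, $\rho$ is a homomorphism from the semigroup $\Psi^a_a(M)$ to $G$. To see that $\rho$ descends to $\Gamma=\Psi^a_a(M)/\Psi^a_{a,\zeta}(M)$, I will appeal to Lemma \ref{holonomy}: any loop $\gamma\in\Psi^a_{a,\zeta}(M)$ bounds a disk $D$ with $\int_D\zeta=0$, so $\rho(\gamma)=\exp(0\cdot\xi)=e$. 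Two loops $\gamma_1,\gamma_2$ representing the same class in $\Gamma$ then satisfy $\gamma_2^{-1}\gamma_1\in\Psi^a_{a,\zeta}(M)$, giving $\rho(\gamma_1)=\rho(\gamma_2)$.

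For the bundle isomorphism, define $f\colon \Psi_a(M)\times G\to P$ by $f(\delta,g):=\tilde\delta(1)\cdot g$, where $\tilde\delta$ is the unique horizontal lift of $\delta$ to $P$ starting at $u_0$. I must then verify that $f$ factors through both the equivalence on $\Psi_a(M)$ and the $\rho$-twisted identification defining $P_\rho$. For the former, if $\delta_2^{-1}\delta_1\in\Psi^a_{a,\zeta}(M)$, then the loop $\delta_2^{-1}\delta_1$ has trivial holonomy at $u_0$, so that the horizontal lifts $\tilde\delta_1$ and $\tilde\delta_2$ share the same endpoint. For the latter, $G$-invariance of horizontal subspaces shows that the horizontal lift of $\delta\gamma$ from $u_0$ first traces $\tilde\gamma$ out to $u_0\cdot\mathrm{hol}(\gamma)$ and then lifts $\delta$, yielding endpoint $\tilde\delta(1)\cdot\rho(\gamma)$, so that $f([\delta\gamma,g])=f([\delta,\rho(\gamma)g])$. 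Hence $f\colon P_\rho\to P$ is well-defined, $G$-equivariant, and covers the identity on $M$, so it is a bundle isomorphism.

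It remains to match the connections. Recall from Step 1 of the construction of $(P_\rho,A_\rho)$ that the horizontal lift on $P_\rho$ of a path $\alpha$ through $[\delta,g]$ is $t\mapsto[\alpha_{(t)}\delta,g]$. Under $f$, this path becomes $t\mapsto \widetilde{\alpha_{(t)}\delta}(1)\cdot g$; by uniqueness of horizontal lifts on $P$, $\widetilde{\alpha_{(t)}\delta}(1)$ coincides with the horizontal lift of $\alpha$ on $P$ starting at $\tilde\delta(1)$ evaluated at time $t$. Right translation by $g$ preserves horizontality, so $f_*$ carries horizontal vectors to horizontal vectors, which gives $f^*A=A_\rho$ as required.

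The main technical input is Lemma \ref{holonomy}, which encodes the key fact that the cone-flat curvature $F_A=-\zeta B$ forces the holonomy of a contractible loop to depend only on the symplectic area $\int_D\zeta$; this is what makes $\rho$ descend to $\Gamma$. Once that is in hand, the remainder is essentially bookkeeping, although some care must be taken with the path-concatenation conventions to ensure that the $\Psi_a(M)$-equivalence and the $\rho$-twisted equivalence on $P_\rho$ are handled consistently.
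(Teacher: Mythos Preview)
Your proof is correct and follows essentially the same approach as the paper: define $\rho$ via holonomy at a fixed $u_0\in P_a$, invoke Lemma~\ref{holonomy} to see that $\rho$ kills $\Psi^a_{a,\zeta}(M)$ and hence descends to $\Gamma$, build the isomorphism $f$ by $[\delta,g]\mapsto\tilde\delta(1)\cdot g$ using horizontal lifts in $(P,A)$, and check $f^*A=A_\rho$ by tracking horizontal lifts of $\alpha_{(t)}\delta$. The only cosmetic difference is that you separate the well-definedness check into two parts (the $\Psi_a(M)$-equivalence and the $\rho$-twist) while the paper handles them in a single computation; both are equally valid.
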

\begin{proof}
For any contractible loop $\gamma\in\Psi^a_{a,\zeta}(M)$ and a disk $D$ such that $\partial D=\gamma$, it follows from the definition of $\Psi^a_{a,\zeta}(M)$ that $\int_D \zeta=0$. By Lemma $\ref{holonomy}$, the holonomy along $\gamma$ in $(P, A)$ is given by $hol(\gamma)=\mathrm{exp}\Big[\Big(\int_D \zeta\, \Big)\, \xi \Big]=e$. Hence, we define the following homomorphism:
\begin{align}\label{hol}
\rho:\Psi^a_a(M)/\Psi^a_{a,\zeta}(M)\to G,\quad \gamma\mapsto \rho(\gamma)=hol(\gamma)\,.
\end{align}
We will show that the resulting $(P_\rho,A_\rho)$ is equivalent to $(P,A)$.  We start by defining a $G$-bundle isomorphism between the two principal bundles.

Given $[\delta,g]\in P_\rho$, let $\tilde{\delta}$ be the horizontal lift of $\delta$ in $P$ with $\tilde{\delta}(0)=u_0$. We define
\begin{align}\label{fdeff}
f:P_\rho\to P,\quad [\delta,g]\mapsto \tilde{\delta}(1)\,g\,.
\end{align}
Note that the definition of $f$ utilizes the horizontal lift determined by the connection $A$ in $P$. 
Let us show that this map is well-defined.  Suppose $[\delta_1,g_1]$ and $[\delta_2,g_2]$ represent the same class in $P_\rho$.  Then there must exists a $\gamma_0\in \Psi^a_a(M)/\Psi^a_{a,\zeta}(M)$ such that 
\begin{align} \label{fwdef}
(\delta_1, g_1)= (\delta_2\gamma_0, \rho(\gamma_0^{-1})g_2) \sim (\delta_2, g_2)\,.
\end{align}
Thus, $\gamma_0^{-1}\delta_2^{-1}\delta_1\in \Psi^a_{a,\zeta}(M)$, and its holonomy  in $(P, A)$ is trivial according to Lemma \ref{holonomy}. Moreover, we have 
\begin{align}\label{hhrho}
 hol(\delta_2^{-1}\delta_1)= hol(\gamma_0)=\rho(\gamma_0)\,,
\end{align}
where the last equality follows from our definition of $\rho$ in \eqref{hol}.
Now let $\tilde{\delta}_1$ and $\tilde{\delta}_2$ denote the horizontal lift in $(P, A)$ of $\delta_1$ and $\delta_2$, respectively, starting at $u_0$. Since $\tilde{\delta}_1$ and the horizontal lift of $\delta_2\gamma_0=\delta_2(\delta_2^{-1}\delta_1)$ starting at $u_0$ have the same ending point, so does $\tilde{\delta}_1$ and the horizontal lift of $\delta_2$ starting at $u_0\cdot hol(\gamma_0)$. Hence, the ending points of $\tilde{\delta}_1$ and $\tilde{\delta}_2$ satisfy $\tilde{\delta}_1(1)=\tilde{\delta}_2(1)\cdot hol(\gamma_0)$. Together with \eqref{fdeff}-\eqref{hhrho}, we find
$$
f\left([\delta_1,g_1]\right)=\tilde{\delta}_1(1)\,g_1=\tilde{\delta}_2(1)\rho(\gamma_0)\,g_1=\tilde{\delta}_2(1)\,g_2=f(\left[\delta_2,g_2]\right).
$$
Hence, $f$ is well-defined.  It is also straightforward to check that $f$ is a $G$-bundle isomorphism.

Finally, we check that the definition of $A_\rho$ described earlier in Step 1 is consistent with $f^*A=A_\rho$. For $[\delta,g]\in P_\rho$ and an arbitrary path $\alpha$ on $M$  such that $\alpha(0)=\delta(1)$, the horizontal lift of $\alpha$ at $[\delta,g]$ is $\tilde{\alpha}(t)=[\alpha_{(t)}\delta,g]$. Here again, $\alpha_{(t)}$ is the subpath of $\alpha$ starting at $\alpha(0)$ and ending at $\alpha(t)$. Notice that $f\left([\alpha_{(t)}\delta,g]\right)= (\widetilde{\alpha_{(t)}\delta})(1)\,g$ is the ending point of the path on $P$ that is the horizontal lift of $\alpha_{(t)}$ in $(P, A)$ starting at $f\left([\delta,g]\right)=\tilde{\delta}(1)\,g$. Clearly then, $f\circ\tilde{\alpha}$ is a horizontal path on $(P, A)$.  Hence,  $f_*$ sends horizontal vectors to horizontal vectors, and therefore, $f^*(A)=A_\rho$.
\end{proof}

Combining the lemmas above, we have proved that $[\rho]\mapsto(P_{\rho},A_{\rho})$ is an isomorphism.

\bigskip

\noindent \textbf{Proof for Case 3: $\mathbb{R}/H$ is not a Lie group}

We now turn to the case when $\mathbb{R}/H$ is not a Lie group. In this case, $H^+$ is non-empty and has no minimal number.
\begin{lem} 
When $H^+$ is non-empty and has no minimal number, a cone-flat connection is a flat connection.
\end{lem}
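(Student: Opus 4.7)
The plan is to leverage Lemma \ref{holonomy}, which the author notes remains valid in this case, together with the fact that $H$ is dense in $\mathbb{R}$, to force the Lie-algebra element $\xi$ (controlling the curvature via the holonomy) to vanish. Once $\xi=0$, the identity $F_A=-\zeta B$ combined with $B\equiv \xi$ on the holonomy subbundle $\hat P$ will give $F_A=0$ globally.

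First, I would invoke Lemma \ref{holonomy} to produce $\xi\in\mathfrak{g}$ (in fact $\xi\in\hat{\mathfrak{g}}$) such that every contractible loop $\gamma=\partial D$ in $M$ has holonomy $hol(\gamma)=\exp[(\int_D\zeta)\,\xi]$. Next, I would fix an arbitrary element $h\in H$, represented by a sphere $\mathcal{S}\in\pi_2(M)$ with $\int_\mathcal{S}\zeta=h$. The sphere $\mathcal{S}$ can be cut along an equator into two disks $D_1,D_2$ meeting in a common contractible loop $\gamma$ such that $\partial D_1=\gamma=-\partial D_2$ and $\int_{D_1}\zeta-\int_{D_2}\zeta=\int_\mathcal{S}\zeta=h$. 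Since the holonomy along $\gamma$ is unambiguous, we obtain
\begin{align*}
\exp\!\Big[\Big(\int_{D_1}\zeta\Big)\xi\Big]=hol(\gamma)=\exp\!\Big[\Big(\int_{D_2}\zeta\Big)\xi\Big],
\end{align*}
which gives $\exp(h\,\xi)=e$ in $G$. Thus $\exp(h\,\xi)=e$ for every $h\in H$.

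Since $H^+$ has no minimal element, $H$ is dense in $\mathbb{R}$, so by continuity of $t\mapsto \exp(t\xi)$ we conclude $\exp(t\xi)=e$ for all $t\in\mathbb{R}$, which forces $\xi=0$. Returning to the proof of Lemma \ref{holonomy}, recall that on the holonomy bundle $\hat P$ one has $B\equiv\xi$. Therefore $B$ vanishes at a point of $\hat P$, and since $B$ is covariantly constant and $M$ is path-connected, $B\equiv 0$ on all of $M$. The cone-flat condition $F_A=-\zeta B$ then yields $F_A=0$, so $A$ is flat as claimed.

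The only delicate point is the passage from "a sphere $\mathcal{S}$ with $\int_\mathcal{S}\zeta=h$" to "two disks $D_1,D_2$ with common boundary realizing this integral": one must check that any smooth representative of a class in $\pi_2(M)$ can be cut into two such disks. Generically this is immediate (transversality along a great circle), but for a singular representative one can first perturb $\mathcal{S}$ within its homotopy class to a smooth map, then use a generic equator. This is the only nontrivial technicality; everything else follows formally from Lemma \ref{holonomy} and density of $H$ in $\mathbb{R}$.
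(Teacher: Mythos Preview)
Your argument is correct and follows essentially the same strategy as the paper: use Lemma~\ref{holonomy} to see that $\exp(h\,\xi)=e$ for every $h\in H$, then exploit the fact that $H^+$ has no minimum to force $\xi=0$. The paper's execution is slightly cleaner on both counts and, in particular, dissolves the technicality you flag at the end. Rather than cutting a sphere along an equator into two disks, the paper takes $\gamma$ to be the \emph{constant} loop at a point $p\in\mathcal{S}$ and uses $D=\mathcal{S}\setminus\{p\}$ as the bounding disk; since $hol(\gamma)=e$ trivially, one reads off $\exp\big[(\int_{\mathcal{S}}\zeta)\,\xi\big]=e$ directly, with no decomposition or transversality needed. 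Second, instead of invoking density of $H$ and continuity of $\exp$, the paper argues by contradiction: if $\xi\neq 0$ there is $t_0>0$ with $\exp(t\xi)\neq e$ for $0<t<t_0$, and the absence of a minimal element in $H^+$ supplies a sphere with $\int_{\mathcal{S}}\zeta=t\in(0,t_0)$. Your density-plus-continuity step is of course equivalent, and your conclusion via $B\equiv\xi=0$ on $\hat{P}$ (hence $F_A=-\zeta B=0$) is a valid alternative to the paper's ``trivial holonomy on contractible loops $\Rightarrow$ flat'' endgame. (One small quibble: with your orientation convention $\partial D_1=\gamma=-\partial D_2$, the sphere integral is $\int_{D_1}\zeta+\int_{D_2}\zeta$, and applying Lemma~\ref{holonomy} to $\gamma$ via $-D_2$ gives $\exp[-(\int_{D_2}\zeta)\xi]$; the two sign slips cancel, so your conclusion $\exp(h\xi)=e$ stands.)
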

\begin{proof}
By Lemma \ref{holonomy}, there exists some $\xi\in\mathfrak{g}$ such that for any contractible loop $\gamma$ and disk $D$ with $\partial D=\gamma$, we have $hol(\gamma)=\mathrm{exp}\Big[\Big(\int_D \zeta\, \Big)\, \xi \Big]$.  

Let us show that $\xi$ must be zero.  For if $\xi\neq 0$, then there exists some small enough $t_0$ such that for any $0<t<t_0\,$, $\mathrm{exp}(t\,\xi)\neq e\,$ the identity element of $G$.  Now since $H^+$ has no minimal number, there exists a closed sphere $S\subset M$ such that $\int_S\zeta=t$ for some $0<t<t_0$.  Let $\gamma$ be a constant loop at some point $p\in S$, and $D=S\setminus\!\{p\}\,$.  Then $\partial D=\gamma$ so that $hol(\gamma)=\mathrm{exp}\Big[\Big(\int_D \zeta\, \Big)\, \xi \Big]$. But $hol(\gamma)=e$ as $\gamma$ is the identity loop. On the other hand, $\int_D \zeta=\int_S\zeta=t\,$, and therefore, $\mathrm{exp}(t\,\xi)=e$, which gives a contradiction. Thus, we conclude that $\xi$ is zero.  

With $\xi=0$, the holonomy of any contractible loop is trivial.  Hence, the curvature vanishes and the connection is flat.
\end{proof}

The classification of $G$-bundles with flat connections can be represented by the conjugacy classes of homomorphisms from $\pi_1(M)\to G$ (c.f. \cite{Morita}*{Theorem 2.9}). So we have proved Theorem \ref{classification} in this case. This completes the proof of the theorem.

\

We point out that the classification of cone-flat bundles generally depends on the choice of $\zeta$.  In Theorem \ref{classification}, the $\zeta$ dependence explicitly appears in the definition of $H$ in \eqref{Hdef}.  Below, we will work out the classification and demonstrate its dependence on $\zeta$ in the simple example of $U(1)$ bundles over the four-dimensional torus $M=T^4$.

\begin{ex}
We describe $T^4$ as $\mathbb{R}^4/\sim$, with the identification $(x_1,x_2,x_3,x_4)\sim(x_1+a,x_2+b,x_3+c,x_4+d)$ where $a,b,c,d\in\mathbb{Z}$, and $U(1)$ as $\{ z\in\mathbb{C}\mid |z|=1 \}$. Let 
$$\zeta=c_1\,dx_1\wedge dx_2+c_2\, dx_3\wedge dx_4$$ 
be a closed 2-form with $c_1,c_2\in\mathbb{R}\setminus \{0\}$. By Theorem \ref{classification}, the equivalent classes of $U(1)$ bundles with a cone-flat connection are in 1-1 correspondence with homomorphisms $\rho:\Gamma\to U(1)$, where $\Gamma=\Psi^a_a(M)/\Psi^a_{a,\zeta}(M)$. 

Since $\pi_2(T^4)$ is trivial, $\Gamma$ is an $\mathbb{R}$-extension of $\pi_1(T^4)$. To describe its group structure explicitly, let $a_i$ for $i=1,2,3,4$ be the straight line path in $\mathbb{R}^4$ starting at the origin and ending at the point where the $i$-th coordinate, $x_i=1\,$, and $x_j=0$ for $j\neq i\,$. When projected to $T^4$, $\{a_1, a_2, a_3, a_4\}$ become the generators of $\pi_1(T^4)$. Although $\pi_1(T^4)$ is abelian, the elements of $\{a_1, a_2, a_3, a_4\}$ may no longer commute in $\Gamma$. 

For a contractible loop $b$, there is an  oriented disk $D$ such that $\partial D=b$ and $\partial D$ has the same orientation as $b$.  Let $|b|=\int_D \zeta$, and we note that $|b|$ is independent of the choice of $D$.  Now recall that  $ \Gamma=\Psi^a_a(M)/\Psi^a_{a,\zeta}(M)\,$ is a quotient of loops by contractible ones whose integral, $|b|=\int_D \zeta\,=0\,$.   Moreover, contractible loops $b$ and $b'$ would represent different classes in $\Gamma$ if and only if $|b|\neq |b'|$.  So, a class of contractible loops $b\in\Gamma$ can be represented by the real number $|b|$.  

On $T^4$ then, we can think of $\Gamma$ as being generated by $\{a_1,a_2, a_3,a_4,|b|\}$, with multiplication defined by
\begin{align*}
a_i|b|=|b|a_i\, \qquad |b'b|=|b|+|b'|\,.    
\end{align*}
However, $a_j^{-1}a_i^{-1}a_ja_i$ can represent some non-trivial class of a contractible loop.  Specifically, we have
\begin{align*}
|a_2^{-1}a_1^{-1}a_2a_1|&=c_1, \\
|a_4^{-1}a_3^{-1}a_4a_3|&=c_2, \\
|a_j^{-1}a_i^{-1}a_ja_i|&=0, \text{ for other }i,j,  
\end{align*}

Now, the possible homomorphisms of $\rho:\Gamma\to U(1)$ is dependent on whether $\frac{c_1}{c_2}\in\mathbb{Q}$. Since $U(1)$ is abelian, 
$$
\rho(c_1)=\rho(a_2^{-1})\rho(a_1^{-1})\rho(a_2)\rho(a_1)=\rho(a_2)^{-1}\rho(a_2)\rho(a_1)^{-1}\rho(a_1)=1.
$$
Similarly $\rho(c_2)=1$, so $\rho(pc_1+qc_2)=1$ for any $p,q\in\mathbb{Z}$.

When $\frac{c_1}{c_2}\in\mathbb{Q}$, $\{ (pc_1+qc_2)\,|\, p,q\in\mathbb{Z} \}$ has a minimal positive number $c_0$. Then $\rho(b)$ must have the form $e^{2\pi i\frac{ n|b|}{c_0}}$ for some $n\in\mathbb{Z}$. In this case, the Euler class of the circle bundle is $\frac{n}{c_0}\zeta$, and the choice of $\rho(a_i)$ for $i=1, \dots, 4$ determines the connection.

When $\frac{c_1}{c_2}\notin\mathbb{Q}$, $\{ (pc_1+qc_2)\,|\, p,q\in\mathbb{Z} \}$ is dense in $\mathbb{R}$. So $\rho(b)$ must be 1 for any $|b|\in\mathbb{R}$. Thus, the classification is only dependent on $\rho(a_i)$, and becomes equivalent to the classification of flat connections. Actually, the Euler class in this case is $c\,\zeta$ for some $c\in\mathbb{R}$.  But the Euler class is an integral class, and hence, the only possible $c$ is 0, i.e. every cone-flat connection is flat.
\end{ex}

\medskip

\begin{rmk}
When $M$ is simply-connected,
the Hurewicz homomorphism $\pi_2(M)\to H_2(M,\mathbb{Z})$ is surjective. Assuming $\zeta$ is not $d$-exact, $H$ is discrete if and only if there exists some non-zero constant $c$ such that $c[\zeta]\in H^2(M,\mathbb{Z})$. Therefore, if such $c$ exists, then $\Gamma=\mathbb{R}/\mathbb{Z}$ and the classification of isomorphism classes of cone-flat connections on $G$-bundles is given by the conjugacy classes of $\text{Hom}(S^1,G)$. If such $c$ does not exist, then $\Gamma$ is trivial and then there does not exist non-trivial cone-flat connections.

 As an application of this observation, let $M$ be simply-connected and closed.  If $M$ is also a projective manifold, then we can choose $\zeta$ to be the K\"ahler form which is an  integral class, and so, $\zeta\in H^2(M, \mathbb{Z})$.  The isomorphism classes of cone-flat connections on $G$-bundles is then given by the conjugacy classes of $\text{Hom}(S^1,G)$.  If however $M$ is a non-projective K\"ahler manifold and we still let $\zeta$ be the K\"ahler form, then there is no non-zero constant $c$ such that $c[\zeta]\in H^2(M,\mathbb{Z})$ (see, for a reference, \cite{Huybrechts}*{Corollary 5.3.3}) and the isomorphism classes of cone-flat connections would be trivial.
\end{rmk}

\

\noindent {\bf Data Availability} 
There is no data associated with this article.

\

\noindent{\bf Conflict of Interest} 
The authors have no financial or proprietary interests in any material discussed in this article.

\


\begin{bibdiv}
\begin{biblist}[\normalsize]

\bib{AS}{article}{
author={Ambrose, W.},
   author={Singer, I. M.},
   title={A theorem on holonomy},
   journal={Trans. Amer. Math. Soc.},
   volume={75},
   date={1953},
   pages={428--443},
   issn={0002-9947},
}

\bib{Atiyah-Bott}{article}{
   author={Atiyah, M. F.},
   author={Bott, R.},
   title={The Yang-Mills equations over Riemann surfaces},
   journal={Philos. Trans. Roy. Soc. London Ser. A},
   volume={308},
   date={1983},
   number={1505},
   pages={523--615},
   issn={0080-4614},
}

\bib{CTT}{article}{
    author={Clausen, D.},
    author={Tseng, L.-S.},
    author={Tang, X.},
    title={Mapping cone and Morse theory},
    note = {arXiv:2405.02272 [math.DG]},
}

\bib{Huybrechts}{book}{
   author={Huybrechts, D.},
   title={Complex Geometry: An Introduction},
   publisher={Springer-Verlag, Berlin},
   date={2005},
   pages={xii+309},
}

\bib{LeBrun}{article}{
   author={LeBrun, C.},
   title={Complete Ricci-flat K\"{a}hler metrics on ${\bf C}^n$ need not be
   flat},
   conference={
      title={Several complex variables and complex geometry, Part 2},
      address={Santa Cruz, CA},
      date={1989},
   },
   book={
      series={Proc. Sympos. Pure Math.},
      volume={52, Part 2},
      publisher={Amer. Math. Soc., Providence, RI},
   },
   date={1991},
   pages={297--304},
}

\bib{Morita}{book}{
   author={Morita, S.},
   title={Geometry of Characteristic Classes},
   series={Translations of Mathematical Monographs},
   volume={199},
   publisher={American Mathematical Society, Providence, RI},
   date={2001},
   pages={xiv+185},
   isbn={0-8218-2139-3},
}

\bib{Morrison}{article}{
   author={Morrison, K.},
   title={Yang-Mills connections on surfaces and representations of the path
   group},
   journal={Proc. Amer. Math. Soc.},
   volume={112},
   date={1991},
   number={4},
   pages={1101--1106},
   issn={0002-9939},
}

\bib{Oh}{article}{
   author={Oh, J. J.},
   author={Park, C.},
   author={Yang, H. S.},
   title={Yang-Mills instantons from gravitational instantons},
   journal={J. High Energy Phys.},
   date={2011},
   number={4},
   pages={087, 37},
}
\bib{Prasad}{article}{
   author={Prasad, M. K.},
   title={Instantons and monopoles in Yang-Mills gauge field theories},
   journal={Phys. D},
   volume={1},
   date={1980},
   number={2},
   pages={167--191},
   issn={0167-2789},
}

\bib{TZ}{article}{
   author={Tseng, L.-S.},
   author={Zhou, J.},
   title={Symplectic flatness and twisted primitive cohomology},
   journal={J. Geom. Anal.},
   volume={32},
   date={2022},
   number={282},
}

\bib{Weibel}{book}{
   author={Weibel, J.},
   title={An Introduction to Homological Algebra},
   series={Cambridge Series in Advanced Mathematics},
   publisher={Cambridge University Press, New York, N.Y.},
   year={1994},
   pages={1--25},
   doi ={10.1017/CBO9781139644136},
}

\end{biblist}
\end{bibdiv}

\vskip 1cm
\noindent
{Department of Mathematics, University of California, Irvine, CA 92697, USA}\\
{\it Email address:}~{\tt lstseng@uci.edu}
\vskip .5 cm
\noindent
{School of Mathematics and Computer Sciences, Nanchang University, Nanchang, Jiangxi, 330031, China}\\
{Beijing Institute of Mathematical Sciences and Applications, Beijing, 101408, China}\\
{\it Email address:}~{\tt jiaweizhou90@ncu.edu.cn}

\end{document}